\newtheorem*{acknowledgements*}{Acknowledgements}
\newcommand{\h}{\mathbb{H}}
\newcommand{\s}{\mathbb{S}}
\newcommand{\z}{\mathbb{Z}}
\newcommand{\R}{\mathbb{R}}
\theoremstyle{definition}
\newtheorem{theorem}{Theorem}[section]
\newtheorem{prop}[theorem]{Proposition}
\newtheorem{lemma}[theorem]{Lemma}
\newtheorem{cor}[theorem]{Corollary}
\newtheorem{note}[theorem]{Note}
\newtheorem{defn}[theorem]{Definition}
\newtheorem{rmk}[theorem]{Remark}
\newtheorem{fact}[theorem]{Fact}
\newtheorem{thm}[theorem]{Theorem}
\newtheorem*{theorem*}{Theorem}
\newtheorem*{corollary*}{Corollary}
\newtheorem*{prop*}{Proposition}
\newtheorem*{rmk*}{Remark}
\newenvironment{manualtheorem}[1]{%
	\manualtheoreminner
}{\endmanualtheoreminner}
\begin{document}
	\title[]{Smooth Structures on the product of $3$-connected $8$-manifolds with spheres}
	\author{Ankur Sarkar}
	\address{The Institute of Mathematical Sciences, A CI of Homi Bhabha National Institute, CIT Campus, Taramani, Chennai 600113, India.}
	\email{ankurs@imsc.res.in; ankurimsc@gmail.com}	
         \subjclass [2020] {Primary : 57R55 {; Secondary : 57R67, 55P47}}
	\keywords{Inertia group, Homotopy inertia group, Concordance Inertia group, Product manifold, Quaternionic Projective Space, Projective plane-like manifold.}
	
	\begin{abstract}
 Let $M$ be a closed, $3$-connected, $8$-dimensional smooth manifold. In this paper, we compute the concordance inertia group of the product manifold $M\times\mathbb{S}^k$ for $1\leq k\leq 14$ and classify all smooth manifolds homeomorphic to $M\times\mathbb{S}^k,$ up to concordance for $1\leq k\leq 10.$ Moreover, we provide a diffeomorphism classification of smooth manifolds homeomorphic to $M\times\mathbb{S}^1,$ where $H^4(M;\mathbb{Z})=\mathbb{Z}.$ 
	\end{abstract}
        \maketitle

	\section{Introduction} 
Let $\Theta_n$ denote the set of oriented diffeomorphism classes of homotopy $n$ -spheres, i.e., manifolds homeomorphic to the standard $n$-sphere $\mathbb{S}^n$ for $n > 4$. In \cite{milnor}, Kervaire and Milnor proved that $\Theta_n$ forms a finite abelian group under the connected sum operation, with the equivalence class of the standard $n$-sphere $\mathbb{S}^n$ as the zero element. Any homotopy $n$-sphere representing a nonzero element in $\Theta_n$ is called an exotic $n$-sphere.

Let $M$ be a closed, oriented, smooth $n$-dimensional manifold. One way to modify its smooth structure without changing its homeomorphism type is to take its connected sum with an exotic sphere $\Sigma^n$. This construction often changes the diffeomorphism type of $M$, but this is not always the case.

The \textit{inertia group} of $M$, denoted $I(M)$, is defined as the subgroup of $\Theta_n$ consisting of all homotopy spheres $\Sigma^n$ such that $M \# \Sigma^n$ is oriented diffeomorphic to $M$. Computing $I(M)$ explicitly and establishing bounds for it is generally challenging and has been achieved only in special cases (see \cite{kubo, schultz, BK18, BK22} and \cite{Kosinski, DCCN20, SZ22}).

However, within the inertia group, certain subgroups are of particular interest in the classification of smooth structures. One such subgroup is the \textit{homotopy inertia group}, denoted $I_h(M)$, which consists of those homotopy $n$-spheres $\Sigma^n$ for which there exists a diffeomorphism $M \# \Sigma^n \to M$ that is homotopic to the canonical homeomorphism. Another significant subgroup is the \textit{concordance inertia group}, denoted $I_c(M)$, containing all homotopy $n$-spheres $\Sigma^n$ such that $M \# \Sigma^n$ is concordant to $M$ (see \cite{JM70, JL70}). Clearly, $I_c(M)\subseteq I_h(M)\subseteq I(M) \subseteq\Theta_n.$

For certain product manifolds, these groups have been explicitly computed, such as $\mathbb{S}^p \times \mathbb{S}^q$ \cite{schultz}, $\mathbb{C}P^2 \times \mathbb{S}^7$ \cite{bel}, $\mathbb{C}P^3 \times \mathbb{S}^3$ \cite{XW21}, and $M \times \mathbb{S}^k$ (for $1 \leq k \leq 10$) where $M$ is a closed, oriented 4-manifold or a simply connected 5-manifold \cite{SBRKAS}. In this paper, we focus on the classification of smooth structures on $M \times \mathbb{S}^k$, where $M$ is a closed, $3$-connected, $8$-dimensional smooth manifold, and $k$ takes specific values. Manifolds of this type, $M$ (and more generally, $(n-1)$-connected $2n$-manifolds with $n \geq 3$), were classified by Wall in \cite{wall_highly_connected_manifold}, up to connected sum with a homotopy sphere, using their intersection form
\[ \lambda_M: H^4(M;\mathbb{Z}) \times H^4(M;\mathbb{Z}) \to \mathbb{Z} \]
and their stable tangential invariant
\[ \beta_M: H^4(M;\mathbb{Z}) \to \pi_3(\mathrm{SO}) \cong \mathbb{Z}, \]
which is defined by first realizing the cohomology classes of $H^4(M; \mathbb{Z})$ by embeddings of 4-spheres in $M$ and then taking the clutching functions of the normal bundles of the 4-spheres in $M$. The above result reduced the classification of 3-connected 8-manifolds $M$ to determining the inertia group $I(M)$. Recently, Crowley and Nagy \cite{DCCN20} announced a complete determination of the inertia groups $I(M)$.

Our first result focuses on using the stable homotopy type of $M$ to compute the concordance inertia group $I_c(M \times \mathbb{S}^k)$. Recall from \cite[Theorem 1.1]{rhuang} that if the fourth Betti number of \( M \) (referred to as the rank of \( M \)) is at least 2, then the stable homotopy type of \( M \) is determined by the number \( \mathrm{ind}(M) \). This number is defined as the index of the subgroup \( \mathrm{Im}(\chi) \) in \( \pi_3^s=\mathbb{Z}/{24}\{\nu\} \) when \( \operatorname{Im}(\chi) \neq 0 \), and is zero otherwise. The homomorphism \( \chi \) is given by the composition  
\[
\chi: H_4(M; \mathbb{Z}) \xrightarrow{\beta_M} \pi_3(SO(4)) \xrightarrow{J} \pi_7(S^4) \xrightarrow{E} \pi^s_3,
\]
where \( J \) is the classical \( J \)-homomorphism and \( E \) is the suspension map. The following theorem gives the concordance inertia groups of $M \times S^k$, in terms of $\mathrm{ind}(M)$ and the first Pontryagin class of $M$.

\begin{manualtheorem}{A}
\textit{Let $M$ be a closed, 3-connected, smooth 8-dimensional manifold of rank $m \geq 2$. Then:}
\begin{itemize}
    \item[(i)] $I_c(M\times\s^k)=0$ \textit{for} $1\leq k\leq 14,k \neq 5,9,13$ \textit{and} $14.$ 
    \item[(ii)] \textit{For} $k=5:$
    \begin{itemize}
       \item[(A)] $I_c(M\times\s^5)=0,$ \textit{if} $3\mid p_1(M).$ 
        \item[(B)] $I_c(M\times\s^5)=\mathbb{Z}/3,$ \textit{if} $3 \nmid p_1(M).$
    \end{itemize}
    \item[(iii)] \textit{For} $k=9,13,14$:
    \begin{itemize}
           \item[(A)] $I_c(M\times\mathbb{S}^k)=0,$ \textit{if} $\mathrm{ind}(M)$ \textit{is even}.
           \item[(B)] $I_c(M\times\mathbb{S}^k)=\mathbb{Z}/2,$ \textit{if} $\mathrm{ind}(M)$ \textit{is odd}.
    \end{itemize}
\end{itemize}
(Theorem A is a part of Theorem \ref{ConcorMtimesSk}.)
\end{manualtheorem} 
We note that every closed $(n-1)$-connected manifold of dimension \(2n\geq 4\) with rank 1 is a projective plane-like manifold. Recall that a \textit{projective plane-like manifold}  is a simply connected, closed topological manifold $M$ such that $H_*(M;\mathbb{Z})\cong \mathbb{Z}\oplus\mathbb{Z}\oplus \mathbb{Z}.$ Examples of such manifolds include the projective planes $\mathbb{C}P^2$, $\mathbb{H}P^2$, and $\mathbb{O}P^2$, which are defined over the complex numbers, quaternions, and octonions, respectively. Eells and Kuiper \cite{EK62} established several fundamental results in the study of projective plane-like manifolds. For instance, they showed that the integral cohomology ring of such a projective plane-like manifold $M$ is isomorphic to that of a projective plane, i.e., \[ H^*(M; \mathbb{Z}) \cong \mathbb{Z}[x]/(x^3).\] This result, combined with the solution of the Hopf invariant one problem, implies that the dimension of $M$ must be $4,8,$ or $16$ (cf. \cite[\S 5]{EK62}). Moreover, they showed that there are six homotopy types of projective plane-like manifolds of dimension $8$ and sixty of dimension $16$ \cite[\S 6]{EK62}. 

Furthermore, Eells and Kuiper classified smooth projective plane-like manifolds of dimensions $8$ and $16$ up to connected sum with homotopy spheres. This classification was later completed by Kramer and Stolz \cite{LKSS07}, who showed that the diffeomorphism class of a smooth projective plane-like manifold $M$ of dimension $2n \geq 8$ is determined by its Pontryagin number \[ p_{\frac{n}{4}}^2(M)[M] \in \mathbb{Z}.\] The results of Eells and Kuiper, combined with those of Wall \cite{wall_highly_connected_manifold}, established that an integer $k$ equals the Pontryagin number $p_{\frac{n}{4}}^2(M)[M]$ of such a manifold if and only if $k$ takes the form: \[ k = 2^2(1 + 2t)^2 \quad \text{with } t \equiv 0, 7, 48, 55 \pmod{56} \quad \text{(for } n = 4 \text{)} \] \[ k = 6^2(1 + 2t)^2 \quad \text{with } t \equiv 0, 127, 16128, 16255 \pmod{16256} \quad \text{(for } n = 8 \text{)} \] (see, for example, \cite[Theorem 1.3]{LKSS07}). These results provide an infinite family of smooth projective plane-like manifolds in dimensions $4$ and $8$  each of which possesses a unique differentiable structure, in the sense that any manifold homeomorphic to $M$ is, in fact, diffeomorphic to $M$. 
 
In the following, we describe the concordance inertia group for the product of a smooth projective plane-like $8$-dimensional manifold with the sphere $\mathbb{S}^k.$
\begin{manualtheorem}{B}
    Let $M$ be a smooth projective plane-like of dimension $8$. Then:
 \begin{enumerate}
    \item[(a)] $I_c(M\times\s^k)$ is zero for $1\leq k \leq 14$ and $k\neq 5,9, 13,14.$ 
     \item[(b)] for $k=5$
    \begin{itemize}
       \item[(i)] $I_c(M\times\s^5)=0,$ if $3\mid p_1(M).$ 
        \item[(ii)] $I_c(M\times\s^5)=\mathbb{Z}/3,$ if $3 \nmid p_1(M).$
    \end{itemize}
    \item[(c)] $I_c(M\times\mathbb{S}^k)=\mathbb{Z}/2$ for $k=9, 13,$ and $14.$
 \end{enumerate}
 (Theorem B is a part of Theorem \ref{ConcorMtimesSk}.)
\end{manualtheorem}
It follows from \cite{gw2, GB70} that there is a split short exact sequence
\[ 0 \to bP_{8k+2} = \mathbb{Z}_2 \to \Theta_{8k+1} \to \text{Coker}(J_{8k+1}) \to 0, \quad \text{for all } k \geq 1, \]
where \(bP_{8k+2}\) is the group of homotopy \((8k+1)\)-spheres that bound parallelizable manifolds, and \( \mathrm{Coker}(J_{8k+1}) = \pi^s_{8k+1}/{\mathrm{Im}(J)}.\) In particular, $\mathrm{Coker}(J_{9}) = \mathbb{Z}/2\{\nu^3\} \oplus \mathbb{Z}/2\{\mu\}$ and \(\mathrm{Coker}(J_{17}) = \mathbb{Z}/2\{\eta \eta^*\} \oplus \mathbb{Z}/2\{\nu \kappa\} + \mathbb{Z}/2\{\bar{\mu}\}\) \cite{ravenel, toda}.

As an initial step toward understanding the diffeomorphism types of $M\times\mathbb{S}^1$, we examine its inertia group, leading to the following result.
\begin{manualtheorem}{C}
    Let $M$ be a smooth projective plane-like manifold of dimension $2n\geq 8.$ 
    \begin{itemize}
        \item[(i)] If $n=4$, then $I(M\times\mathbb{S}^1)=bP_{10}\oplus\mathbb{Z}/2\{\Sigma_{\nu^3}\},$ where $\Sigma_{\nu^3}$ is the exotic $9$-sphere associated with the generator $\nu^3 \in \text{Coker}(J_{9}).$ 
        \item[(ii)] If $n=8,$ then $I(M\times\mathbb{S}^1)=bP_{18}\oplus\mathbb{Z}/2\{\Sigma_{\eta\eta^*}\},$ where $\Sigma_{\eta\eta^*}$ is the exotic $17$-sphere corresponding to the generator $\eta\eta^*\in \text{Coker}(J_{17}).$
    \end{itemize}
    (Theorem C is a combination of Lemma \ref{Inertiagrouphp2s1} and Lemma \ref{Inertiagroupop2s1}.)
\end{manualtheorem}
Let \(\mathcal{C}(M)\) denote the group of concordance classes of smooth structures on \(M\) (see Definition \ref{definition_CM}). Using Theorem A, Theorem B, and the stable decomposition of the closed, $3$-connected, $8$-dimensional smooth manifold $M$, we have the following explicit computations of the group $\mathcal{C}(M\times\mathbb{S}^k)$. 

 \begin{manualtheorem}{D}
      Let $M$ be a closed, $3$-connected, $8$-dimensional smooth manifold of rank $m\geq 1.$ Then:
      \begin{itemize}
          \item[(i)] $\mathcal{C}(M\times\mathbb{S}^k)= \Theta_{8+k}\oplus\Theta_8$ for $k=1$ and $2.$ 
          \item[(ii)] $\mathcal{C}(M\times\mathbb{S}^3)= \Theta_{11}\oplus\bigoplus\limits_{i=1}^m\Theta_{7}\oplus\Theta_8\oplus\mathbb{Z}/2.$
          \item[(iii)] $\mathcal{C}(M\times\mathbb{S}^4)=\bigoplus\limits_{i=1}^{m+1}\Theta_{8}.$
          \item[(iv)] $\mathcal{C}(M\times\mathbb{S}^k)=\Theta_{8+k}\oplus\bigoplus\limits_{i=1}^m\Theta_{4+k}\oplus\Theta_{8}\oplus\Theta_k$ for $k=7$ and $8.$ 
          \item[(v)] For $k=5:$
              \begin{itemize}
                  \item[(a)] $\mathcal{C}(M\times \mathbb{S}^5)=\Theta_{13}\oplus\bigoplus\limits_{i=1}^m \Theta_9\oplus\Theta_8,$ if $3\mid p_1(M).$
                  \item[(b)] $\mathcal{C}(M\times \mathbb{S}^5)=\bigoplus\limits_{i=1}^m \Theta_9\oplus\Theta_8,$ if $3\nmid p_1(M).$
              \end{itemize}
          \item[(vi)] For $k=6:$
              \begin{itemize}
                  \item[(a)] $\mathcal{C}(M\times\mathbb{S}^6)=G_1\oplus\mathbb{Z}/3\oplus\bigoplus\limits_{i=1}^{m-1}\Theta_{10}\oplus\Theta_8,$ when $3\mid p_1(M).$ 
                  \item[(b)] $\mathcal{C}(M\times\mathbb{S}^6)=G_2\oplus\bigoplus\limits_{i=1}^{m-1}\Theta_{10}\oplus\Theta_8,$ when $3\nmid p_1(M).$ 
              \end{itemize}
         where the groups $G_1$ and $G_2$ are extensions of the group $\mathbb{Z}/2\subset \Theta_{10}$ by the group $\Theta_{14}.$ 
          \item[(vii)] For $k=9:$
          \begin{enumerate}
              \item If $m=1,$ then $\mathcal{C}(M\times\mathbb{S}^9)=\bigoplus\limits_{i=1}^3\mathbb{Z}/2\oplus\Theta_{13}\oplus\Theta_8\oplus\Theta_9.$
              \item If $m\geq 2:$ 
              \begin{itemize}
                  \item[(a)] $\mathcal{C}(M\times\mathbb{S}^9)= \Theta_{17}\oplus\bigoplus\limits_{i=1}^m\Theta_{13}\oplus\Theta_8\oplus\Theta_9,$  for even $\textrm{ind}(M).$
                  \item[(b)]  $\mathcal{C}(M\times\mathbb{S}^9)=\bigoplus\limits_{i=1}^3\mathbb{Z}/2\oplus\bigoplus\limits_{i=1}^m\Theta_{13}\oplus\Theta_8\oplus\Theta_9,$ for odd $\textrm{ind}(M).$
              \end{itemize}
          \end{enumerate}
         \item[(viii)] For $k=10:$
          \begin{enumerate}
              \item If $m=1,$ then $\mathcal{C}(M\times\mathbb{S}^{10})=\Theta_{18}\oplus\Theta_8\oplus\Theta_{10}.$
              \item If $m\geq 2,$ then: 
              \begin{itemize}
                  \item[(a)]  when $\textrm{ind}(M)$ is even, then $\mathcal{C}(M\times\mathbb{S}^{10})= H\oplus\bigoplus\limits_{i=1}^{m-1} \Theta_{14}\oplus\Theta_{8}\oplus\Theta_{10},$ where the group $H$ is an extension of the group $\Theta_{14}$ by the group $\Theta_{18}.$
                  \item[(b)] When $\textrm{ind}(M)$ is odd, then $\mathcal{C}(M\times\mathbb{S}^{10})=\Theta_{18}\oplus\bigoplus\limits_{i=1}^{m-1}\Theta_{14}\oplus\Theta_8\oplus\Theta_{10}.$
              \end{itemize}
          \end{enumerate}
        \end{itemize}
      (Theorem D is a combination of Proposition \ref{msk_rank2}, and Corollary \ref{concorMtimesSk}.)
\end{manualtheorem}
Moreover, combining Theorem C and Theorem D, we obtain the following diffeomorphism classification result.
\begin{manualtheorem}{E}
 Let $M$ be a smooth projective plane-like manifold of dimension $8.$ 
  \begin{itemize}
      \item[(a)] Any closed, oriented, smooth manifold homeomorphic to $M \times \mathbb{S}^1$ is oriented diffeomorphic to either $M \times \mathbb{S}^1$ or $(M \times \mathbb{S}^1) \# \Sigma_{\mu},$ where $\Sigma_{\mu}$ is the exotic $9$-sphere corresponding to the $\text{Coker}(J_9)$ element $\mu.$
      \item[(b)] $bP_{18}\cap I_h(M\times\mathbb{S}^9)=\emptyset.$ 
  \end{itemize}
  (See Theorem \ref{classificationMsk} and Theorem \ref{bp18}.)
\end{manualtheorem}  
According to Hitchin and Lichnerowicz \cite{Hit74, AL63}, if a closed, spin, Riemannian manifold \( M \) admits a metric with positive scalar curvature, then its  \(\alpha\)-invariant \( \alpha(M) = 0 \), where
\[
\alpha: \Omega_{\text{spin}}^* \to KO^*
\]
is the ring homomorphism that associates a spin bordism class with the \(KO\)-valued index of the Dirac operator of a representative spin manifold. In particular, since \( \alpha(\Sigma_{\mu}) \neq 0 \) \cite{DAEBFP}, and the \(\alpha\)-invariant is additive on connected sums, the two non-diffeomorphic manifolds \( \mathbb{H}P^2 \times S^1 \) and \( (\mathbb{H}P^2 \times S^1) \# \Sigma_{\mu} \), given by Theorem E for \( M = \mathbb{H}P^2 \), exhibit different scalar curvature properties: the former admits a metric of positive scalar curvature, while the latter does not.

Finally, we remark that all the calculations in Theorems A, B, and D remain unchanged if, instead of \( M \times S^k \), we consider the product of \( M \) with any homotopy \( k \)-sphere.


	\subsection{Notation}
	\begin{itemize}
		\item Let $O_n$ be the orthogonal group, $PL_n$ is the simplicial group of piece-wise linear homeomorphisms of $\mathbb{R}^n$ fixing origin, $Top_n\subset O_n$ is the group of self homeomorphisms of $\mathbb{R}^n$ preserving origin, and $G_n$ be the set of homotopy equivalences. Denote by $O=\varinjlim~O_n$, $PL= \varinjlim PL_n$, $Top=\varinjlim Top_n,$ and $G=\varinjlim~ G_n$ \cite{kuiperlashof66,lashofrothenberg65,milgram}.
		\item Let $G/O$ be the homotopy fiber of the canonical map $BO\to BG$ between the classifying spaces for stable vector bundles and stable spherical fibrations \cite[\S2 and \S3]{milgram}, and $G/PL$ be the homotopy fiber of the canonical map $BPL\to BG$ between the classifying spaces for $PL$ $\mathbb{\R}^n$-bundles and stable spherical fibrations \cite{rudyak15}. Again $Top/O$ is the homotopy fibre of the map $BO\to BTop$ between the classifying spaces for stable vector bundles and stable topological $\mathbb{R}^n$-bundles \cite[Theorem 10.1 Essay IV]{kirby}. Similarly, $PL/O$ is thew homotopy fibre of the canonical map $BO\to BPL.$
		\item  There are standard fibre sequences $\cdots\to\Omega G/Top\xrightarrow{w} Top/O\xrightarrow{\psi} G/O\xrightarrow{\phi} G/Top$ and $O\xrightarrow{\Omega J}G\xrightarrow{j}G/O\xrightarrow{i}BO\xrightarrow{J}BG$ \cite{gw} where $J:\pi_n(SO)\to\pi_n^s$ is the classical $J$-homomorphism.
		\item We write $X_{(p)}$ for localization of the space $X$ at prime $p.$ We use the notation $f_{(p)}$ when a map $f$ is localized at $p.$ 
        \item If $J:\pi_n(O)\to \pi_n^s$ is non-trivial, We denote the quotient $\pi_n^s/{\mathrm{Im}(J)}$ by $\mathrm{Coker}(J_n).$ 
	\end{itemize}
	
	\subsection{Acknowledgements} 
      	The author expresses sincere gratitude to his thesis supervisor, Ramesh Kasilingam, for his invaluable guidance and support throughout the development of this paper. Much of the work in this paper emerged from productive discussions with him.  The author is grateful for his extensive help and discussions during the first revision of the paper. Additionally, the author thanks Ruizhi Huang and Samik Basu for their assistance in understanding the attaching map in the rank $\geq2$ case. Finally, the author is grateful to the referee for valuable comments and suggestions, which helped improve the clarity and quality of the paper.
	
	\section{Preliminaries}
	In this section, we recall some basic definitions and notations, which will be useful for our purpose. 
 \begin{defn}\label{definition_CM}
     Let $M$ be a smooth manifold. Let $(N,f)$ be a pair consisting of smooth manifold $N,$ together with a homeomorphism $f: N\to M.$ We say two pairs $(N_1,f_1)$ and $(N_2,f_2)$ are \textit{concordant} if there exists a diffeomorphism $g: N_1\to N_2$ and a homeomorphism $F: N_1\times [0,1]\to M\times[0,1]$ such that $F\vert_{N_1\times 0}=f_1$ and $F\vert_{N_1\times 1}= f_2\circ g.$ The set of all such \textit{concordance classes} is denoted by $\mathcal{C}(M).$ The concordance class of $(N,f)$ is denoted by $[(N,f)].$

     The canonical homeomorphism $h_{\Sigma}: M\#\Sigma\to M$ corresponding to homotopy $n$-sphere $\Sigma$ induces the class $[(M\#\Sigma,h_{\Sigma})]$ in $\mathcal{C}(M).$ We often write $[(M\#\Sigma, h_{\Sigma})]$ as $[M\#\Sigma].$
 \end{defn}
	
\begin{thm}{\rm (Kirby and Siebenmann, \cite[Page 194]{kirby})}\label{kirbyresult}
Let $M$ be a closed, oriented, smooth manifold of dimension $n\geq 5.$ Then there is a bijection between $\mathcal{C}(M)$ and $[M,Top/O].$
Furthermore, the concordance class of the given smooth structure on $M$ corresponds to the homotopy class of constant map under this bijection.
\end{thm}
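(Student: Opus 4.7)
The plan is to factor the claimed bijection as a composition of three standard bijections: from $\mathcal{C}(M)$ to concordance classes of smoothings of the topological manifold underlying $M$, thence to concordance classes of smoothings of its stable topological tangent microbundle, and finally to homotopy classes of maps $M \to Top/O$.

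First, I would set up a bijection between $\mathcal{C}(M)$ and the pointed set $\mathcal{S}(M)$ of concordance classes of smoothings of the underlying topological manifold of $M$. Given a pair $(N,f)$ with $f:N\to M$ a homeomorphism, transport the smooth atlas of $N$ along $f^{-1}$ to obtain a smoothing $\Sigma_f$ of $M$. Definition \ref{definition_CM} is arranged precisely so that $(N_1,f_1)$ and $(N_2,f_2)$ are concordant if and only if $\Sigma_{f_1}$ and $\Sigma_{f_2}$ extend to a common smoothing of $M\times[0,1]$; the homeomorphism $F$ in that definition is the concordance. Under this bijection the class of $[(M,\mathrm{id}_M)]$ corresponds to the given smoothing, which serves as the base point.

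Second, I would invoke the Kirby--Siebenmann Product Structure Theorem \cite[Essay I]{kirby}: for $n\geq 5$, smoothings of $M$ up to concordance are in natural bijection with smoothings of the stable topological tangent microbundle $\tau_M^{top}:M\to BTop$ up to concordance. A smoothing of a topological microbundle is the same as a vector bundle reduction of its classifying map, so $\mathcal{S}(M)$ is in bijection with vertical homotopy classes of lifts of $\tau_M^{top}$ along the fibration $BO\to BTop$. Applying the resulting exact sequence of pointed sets
\[
[M,Top/O]\longrightarrow [M,BO]\longrightarrow [M,BTop]
\]
coming from the fiber sequence $Top/O\to BO\to BTop$, together with the preferred lift $\tau_M:M\to BO$ provided by the given smooth structure, one identifies these vertical homotopy classes of lifts with $[M,Top/O]$, with the constant map corresponding to $\tau_M$ itself. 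Composing the three bijections yields the stated bijection, and tracing the base points gives the second assertion.

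The main obstacle is the second step: the Product Structure Theorem is the deep ingredient and is precisely where the hypothesis $n\geq 5$ enters, via the Kirby--Siebenmann handle-smoothing machinery. I would treat it as a black box from \cite{kirby}. Once it is granted, the first bijection is essentially tautological from Definition \ref{definition_CM}, and the third is a standard argument with the principal fibration $Top/O\to BO\to BTop$, so the theorem follows.
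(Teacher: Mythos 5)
The paper does not prove this statement; it is cited verbatim from Kirby--Siebenmann (the reference on page~194 of \cite{kirby} is essentially their Classification Theorem, Essay~IV, Theorem~10.1). There is therefore no in-paper proof to compare against, so I will assess your outline against the standard argument it is meant to summarize.

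Your three-step factorization is the right one, and the overall structure is sound. Two points deserve tightening. First, the result you invoke in step~2 -- that concordance classes of smoothings of $M$ biject with concordance classes of vector-bundle reductions of the stable tangent microbundle $\tau^{top}_M: M\to BTop$ -- is not the Product Structure Theorem itself (Essay~I of \cite{kirby}); it is the Classification Theorem (Essay~IV), of which the Product Structure Theorem is the essential geometric input. The Product Structure Theorem only says that $\Sigma\mapsto\Sigma\times\mathbb{R}$ induces a bijection on concordance classes of smoothings; extracting the bundle-theoretic statement from it requires the further machinery of immersion theory and handle-smoothing developed in Essay~V of \cite{kirby}. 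Second, the first bijection between $\mathcal{C}(M)$ and concordance classes of smoothings is tautological in only one direction. Pushing forward the product structure on $N_1\times[0,1]$ along $F$ does produce a smoothing of $M\times[0,1]$ with the required restrictions. Conversely, given a smoothing $\Xi$ of $M\times[0,1]$ restricting to $\Sigma_0$ and $\Sigma_1$, producing a diffeomorphism $g:(M,\Sigma_0)\to(M,\Sigma_1)$ and the homeomorphism $F$ as in Definition~\ref{definition_CM} requires knowing that the smooth manifold $(M\times[0,1],\Xi)$ is a smooth product over $(M,\Sigma_0)$; this is the smooth $s$-cobordism theorem (the torsion vanishes because it is a topological invariant and the underlying TOP manifold is a product), or equivalently the Concordance-Implies-Isotopy theorem of Essay~I, and is not a formality. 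Both points are comfortably inside the ``black box'' you flag, so the proposal is essentially correct; I just caution against presenting the first bijection as purely definitional or conflating the Product Structure Theorem with the Classification Theorem.
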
	

According to the identification in Theorem \ref{kirbyresult}, the degree one map $f_M: M\to \mathbb{S}^n$ induces the following map: 
\begin{align*}
    (f_M)^*: \mathcal{C}(\mathbb{S}^n)&\to \mathcal{C}(M)\\
    [\Sigma]&\mapsto [M\#\Sigma]
\end{align*}
Since $\mathcal{C}(\mathbb{S}^n)\cong \Theta_n\cong \pi_n(Top/O)$ for $n\geq 5,$ the \textit{concordance inertia group} $I_c(M)$ can be identified with $\mathrm{ker}\left(\Theta_n\xrightarrow{(f_{M})^*}\mathcal{C}(M)\right).$

\begin{note}
    There is an action of the self-homeomorphism group of $M$ on $\mathcal{C}(M).$ The orbit space of this action gives the set of orientation-preserving diffeomorphism classes of smooth manifolds that are homeomorphic to the given manifold $M.$
\end{note}

We recall the sequence 
  \begin{equation}\label{note2.3}
			0\to[ M\land N,Y]\xrightarrow{p^*}[M\times N,Y]\xrightarrow{i^*}[M\vee N,Y]\to0,
		\end{equation}
        induced from the cofibre $$M\vee N\xhookrightarrow{i}M\times N\xrightarrow{p} M\times N/M\vee N\simeq M\land N,$$ which is a split exact sequence for any $H$-space $Y.$ Next, we describe the concordance inertia group of $M\times\mathbb{S}^k$, assuming the stable homotopy type of $M$ is known, derived from the split short exact sequence \eqref{note2.3}.

\begin{lemma}{\rm \cite[Lemma 2.4]{SBRKAS}}\label{concordanceinertia}
Let $M$ be an $n$-dimensional, closed, oriented, and smooth manifold. Suppose $M$ is stably homotopy equivalent to $X\vee Z$, where $X$ is the cofibre of a certain map $f: \mathbb{S}^{n-1} \to \bigvee\limits_{i=1}^w A_i$ with $w\geq 2$, and the dimensions of both  $\bigvee\limits_{i=1}^w A_i$ and $Z$ are less than $n$. Then, for any integer $k\geq 1$ such that $n+k\geq 5,$ the concordance inertia group $I_c(M\times \mathbb{S}^k)$ is isomorphic to $$\mathrm{Im}\left(\bigoplus\limits_{i=1}^w [\Sigma^{k+1} A_i,Top/O]\xrightarrow{\bigoplus\limits_{i=1}^w (\Sigma^{k+1} f_i)^*}\pi_{n+k}(Top/O)\right),$$ where $f_i = Pr_i \circ f,~ 1\leq i\leq w$ are the compositions of $f$ with the projection maps $Pr_i$ from $\bigvee\limits_{i=1}^w A_i $ onto its $i^{th}$ factor.      
\end{lemma}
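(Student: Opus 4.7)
The plan is to translate the question into a computation in $[-, Top/O]$ via Theorem \ref{kirbyresult}, exploit the stable splitting of $M$ to isolate the relevant summand, and then read off the answer from the Puppe sequence of $f$.

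First, Theorem \ref{kirbyresult} identifies $\Theta_{n+k}\cong\pi_{n+k}(Top/O)$ and $\mathcal{C}(M\times\mathbb{S}^k)\cong[M\times\mathbb{S}^k, Top/O]$, and the map $[\Sigma]\mapsto[(M\times\mathbb{S}^k)\#\Sigma]$ corresponds to pre-composition with the degree-one collapse $c:M\times\mathbb{S}^k\to\mathbb{S}^{n+k}$. Since $c$ factors through the projection $M\times\mathbb{S}^k\twoheadrightarrow M\wedge\mathbb{S}^k\simeq\Sigma^kM$, the split exact sequence \eqref{note2.3} (taken with $N=\mathbb{S}^k$ and $Y=Top/O$) shows that the image of $\pi_{n+k}(Top/O)$ in $[M\times\mathbb{S}^k, Top/O]$ sits injectively inside the summand $[\Sigma^kM, Top/O]$. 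Hence $I_c(M\times\mathbb{S}^k)$ is the kernel of $\pi_{n+k}(Top/O)\to[\Sigma^kM, Top/O]$ induced by the top-cell collapse $\Sigma^kM\to\mathbb{S}^{n+k}$.

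Next, using that $Top/O$ is an infinite loop space (so $[-, Top/O]$ is a stable cohomology theory), the assumed stable splitting $M\simeq_{s}X\vee Z$ yields $[\Sigma^kM, Top/O]\cong[\Sigma^kX, Top/O]\oplus[\Sigma^kZ, Top/O]$. Because $\dim Z<n$, the top cell of $\Sigma^kM$ lies entirely in $\Sigma^kX$, so the collapse $\Sigma^kM\to\mathbb{S}^{n+k}$ factors through $\Sigma^kX\to\mathbb{S}^{n+k}$, and the kernel in question reduces to the kernel of $\pi_{n+k}(Top/O)\to[\Sigma^kX, Top/O]$.

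Finally, applying $[-, Top/O]$ to the $k$-fold suspension of the Puppe sequence $\bigvee_{i=1}^w A_i\to X\to\mathbb{S}^n\xrightarrow{\Sigma f}\bigvee_{i=1}^w\Sigma A_i$ produces an exact segment identifying this kernel with the image of $(\Sigma^{k+1}f)^*:[\bigvee_{i=1}^w\Sigma^{k+1}A_i, Top/O]\to\pi_{n+k}(Top/O)$. Splitting the wedge via the $H$-space structure on $Top/O$ and writing $f_i=Pr_i\circ f$ as in the statement, a standard check (valid since $\mathbb{S}^{n+k}$ is a suspension) gives $(\Sigma^{k+1}f)^*=\sum_i(\Sigma^{k+1}f_i)^*$, so the image coincides with $\mathrm{Im}\bigl(\bigoplus_{i=1}^w[\Sigma^{k+1}A_i, Top/O]\xrightarrow{\bigoplus(\Sigma^{k+1}f_i)^*}\pi_{n+k}(Top/O)\bigr)$, as claimed. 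The only real conceptual subtlety lies in the second step (stable invariance of $[-, Top/O]$), which is standard from surgery theory; everything else is a routine cofibre-sequence chase.
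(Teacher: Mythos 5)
Your argument is correct and is the natural derivation of this lemma; since the paper simply cites \cite[Lemma 2.4]{SBRKAS} without reproducing a proof, there is no in-text proof to compare against, but your chain of reductions is sound. You correctly reduce $I_c(M\times\mathbb{S}^k)$ to $\ker\left(\pi_{n+k}(Top/O)\to[\Sigma^kM,Top/O]\right)$ via the splitting \eqref{note2.3} and injectivity of $p^*$, then discard the $Z$-summand using $\dim Z<n$ (the degree-one stable map $X\to\mathbb{S}^n$ is, up to sign, the Puppe collapse $q$, since $[\bigvee A_i,\mathbb{S}^n]^s=0$ forces $q^*:[\mathbb{S}^n,\mathbb{S}^n]^s\to[X,\mathbb{S}^n]^s$ to be onto), and read the kernel off the exact sequence $[\bigvee\Sigma^{k+1}A_i,Top/O]\xrightarrow{(\Sigma^{k+1}f)^*}\pi_{n+k}(Top/O)\xrightarrow{(\Sigma^kq)^*}[\Sigma^kX,Top/O]$. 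One small point of phrasing: the decomposition of $(\Sigma^{k+1}f)^*$ into $\sum_i(\Sigma^{k+1}f_i)^*$ relies on $Top/O$ being an $H$-space (so that $\psi=\sum_i\psi_i\circ Pr_i$ in $[\bigvee\Sigma^{k+1}A_i,Top/O]$), not on $\mathbb{S}^{n+k}$ being a suspension as you remark; the source's co-$H$ structure is not what makes the wedge split into a product here. This does not affect the validity of the argument.
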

        
Theorem \ref{kirbyresult}, combined with the split short exact sequence (\ref{note2.3}), leads to the following corollary.
   \begin{cor}\label{concorMtimesSk}
       For any closed, oriented smooth manifold $M,$
       $$\mathcal{C}(M\times\mathbb{S}^k)\cong [M,Top/O]\oplus\pi_k(Top/O)\oplus[\Sigma^k M,Top/O],$$ \text{provided}~ $\mathit{dim}(M)+k\geq 5.$
   \end{cor}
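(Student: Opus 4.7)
The plan is to combine Theorem \ref{kirbyresult} with the splitting displayed in \eqref{note2.3}, applied to the pair $(M,\mathbb{S}^k)$, and then identify each resulting summand.

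First I would apply the Kirby--Siebenmann classification to the smooth manifold $M\times\mathbb{S}^k$, which has dimension $\geq 5$ by hypothesis; this gives the identification
\[
\mathcal{C}(M\times\mathbb{S}^k)\;\cong\;[M\times\mathbb{S}^k,\,Top/O].
\]
Next I would feed $N=\mathbb{S}^k$ and $Y=Top/O$ into the split short exact sequence \eqref{note2.3}. This is legitimate because $Top/O$ is an infinite loop space (it sits in the standard fiber sequence $Top/O\to G/O\to G/Top$ of classifying-space fibers recalled in the Notation section), hence in particular an $H$-space, so the sequence
\[
0\to[M\wedge\mathbb{S}^k,\,Top/O]\xrightarrow{p^*}[M\times\mathbb{S}^k,\,Top/O]\xrightarrow{i^*}[M\vee\mathbb{S}^k,\,Top/O]\to 0
\]
splits.

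Then I would identify the three pieces in turn. The smash with $\mathbb{S}^k$ is the $k$-fold suspension, so $M\wedge\mathbb{S}^k\simeq\Sigma^k M$ and the kernel term is $[\Sigma^k M,Top/O]$. The wedge splits homotopy classes into a direct sum, giving
\[
[M\vee\mathbb{S}^k,\,Top/O]\;\cong\;[M,\,Top/O]\oplus[\mathbb{S}^k,\,Top/O]\;=\;[M,\,Top/O]\oplus\pi_k(Top/O).
\]
Assembling these yields the claimed isomorphism.

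There is essentially no hard step here; the only point to be careful about is that the splitting of \eqref{note2.3} requires an $H$-space target, which is why I would make the $H$-space (in fact infinite loop space) structure on $Top/O$ explicit. The dimension hypothesis $\dim(M)+k\geq 5$ is used precisely to invoke Theorem \ref{kirbyresult}.
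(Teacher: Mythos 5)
Your proposal is correct and matches the paper's own argument: the paper explicitly states that the corollary follows by combining Theorem \ref{kirbyresult} with the split short exact sequence \eqref{note2.3}, applied with $N=\mathbb{S}^k$ and $Y=Top/O$, together with the identifications $M\wedge\mathbb{S}^k\simeq\Sigma^k M$ and $[M\vee\mathbb{S}^k,Top/O]\cong[M,Top/O]\oplus\pi_k(Top/O)$. Your remark that $Top/O$ is an infinite loop space (hence an $H$-space, justifying the splitting in \eqref{note2.3}) makes explicit a point the paper leaves implicit, but is otherwise the same route.
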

\section{Smooth Structures on the Product of a \texorpdfstring{$3$}{}-connected \texorpdfstring{$8$}{}-Manifold with \texorpdfstring{$k$}{}-Sphere up to Concordance}
Let $M$ be a closed $3$-connected, smooth, $8$-dimensional manifold of rank $m\geq 1.$ According to \cite{wall_highly_connected_manifold}, $M$ is homotopy equivalent to a CW complex $(\bigvee\limits_{i=1}^m \mathbb{S}^4)\cup_{g}\mathbb{D}^8,$ where $g:\mathbb{S}^7\to \bigvee\limits_{i=1}^m \mathbb{S}^4$ is the attaching map of the top cell $\mathbb{D}^8$. From \cite{HDSW03}, if $m\geq2,$ then for any $l\geq 1$, the map $\Sigma^l g$ can be expressed as
\begin{equation}\label{stable}
  \Sigma^l g= \sum_{i=1}^m a_i \nu_{4+l},   
\end{equation}
where each $a_i\in \mathbb{Z}/{12}$, and $\nu_{4+l}$ is the generator of $\pi_{7+l}(\mathbb{S}^{4+l}).$ Applying the methods in \cite[\S 3]{rhuang},
\eqref{stable} can be further refined as
 \begin{equation*}
     \Sigma^l g= \mathrm{ind}(M) \nu_{4+l},
 \end{equation*}
  where $\mathrm{ind}(M)$ is the index the subgroup $\mathrm{Im}(\chi)$ in $\mathrm{Im}(J)$ if the homomorphism $\chi$ is nontrivial and is zero otherwise.
  
 This leads to the following stable homotopy type for $M$ \cite[Theorem 1.1]{rhuang}:
 \begin{equation}\label{stable_homotopy_type}
     M \simeq (\bigvee\limits_{i=1}^{m-1}\mathbb{S}^4)\vee Cone (\mathrm{ind}(M)\nu_4), ~\textit{if}~ m\geq 2.
 \end{equation}
 
For $m=1,$ it follows from \cite[\S 6]{EK62} that for $l\geq1,$
\begin{equation*}
    \Sigma^l g =t \nu_{4+l},\quad \text{where}~ t\in\mathbb{Z}/{24}.
\end{equation*}
Thus, for the rank 1 case, the stable homotopy type of $M$ is given by
\begin{equation}\label{stable_homotopy_type_rank1}
    M\simeq Cone(t\nu_4).
\end{equation}

Using the stable decompositions of $M$ given in \eqref{stable_homotopy_type}, and \ref{stable_homotopy_type_rank1}, it follows form Lemma \ref{concordanceinertia} that the concordance inertia group of $M\times\mathbb{S}^k$ is  
\begin{equation}\label{inertia_description}
    I_c(M\times\mathbb{S}^k)= \mathrm{Im}\left(\Theta_{5+k}\xrightarrow{(s \nu_{5+k})^*}\Theta_{8+k}\right), 
\end{equation}
where $s\in\mathbb{Z}/{24}$ is $\textrm{ind}(M)$ from \eqref{stable_homotopy_type} or $t$ from \eqref{stable_homotopy_type_rank1}.

The following lemma describes the image of the map $ \nu_l: \mathbb{S}^{l+3}\to\mathbb{S}^l$ at $Top/O$ level, where $\nu_l =\nu$ for all $l\geq 5$ \cite{toda}. 

\begin{lemma}\label{imagenu} 
Let $\nu_{l}:\mathbb{S}^{l+3}\to \mathbb{S}^l$ be the generator of $\pi_{l+3}(\mathbb{S}^l)=\pi_3^s.$ Then the image of the induced map $$( \nu_{l})^*:\pi_{l}(Top/O)\to\pi_{l+3}(Top/O)$$ is given in the following table: 
		\begin{center}
			\begin{tabular}{|c|c|c|c|c|c|c|c|c|c|c|c|c|c|}
				\hline
				{$l=$} & 7 &8 &9 & 10 & 11 & 12 & 13 & 14 & 15 & 16 & 17 & 18 & 19\\
				\hline
				{Im$((\nu_l)^*)$} & 0 & 0 & 0 & {$\z/3$} & 0 & 0 & 0 & {$\z/2$} & 0 & 0 & 0 & {$\z/2$} & $\z/2$\\
				\hline
			\end{tabular}
		\end{center} 
\end{lemma}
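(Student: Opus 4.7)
The plan is to convert the problem into a computation on exotic spheres using the Kirby--Siebenmann identification $\pi_l(Top/O)\cong\Theta_l$ from Theorem \ref{kirbyresult}. Under this identification, the map $(\nu_l)^*$ is induced by composition with the stable Hopf class $\nu\in\pi_3^s$ (since $\nu_l=\nu$ for $l\geq 5$ is in the stable range), so the question becomes: for each $l$ in the table, what is the image of the multiplication-by-$\nu$ map $\Theta_l\to\Theta_{l+3}$?

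I would first split each $\Theta_l$ via the Kervaire--Milnor short exact sequence
\[
0\to bP_{l+1}\to\Theta_l\to\mathrm{Coker}(J_l)\to 0
\]
(with the usual Kervaire-invariant correction when $l\equiv 2\Mod{4}$) and analyze the action of $\nu$ on the two summands separately. Naturality of the sequence gives $\nu\cdot bP_{l+1}\subseteq bP_{l+4}$; since $bP_{2k+1}=0$, the $bP$-contribution automatically vanishes whenever $l+4$ is odd, and in the remaining cases a direct inspection against the known structure of $bP_{2k}$ shows that this contribution is still zero. The problem then reduces to computing $\nu$ acting on $\mathrm{Coker}(J_l)$. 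For this, I would identify the Coker-$J$ generators in each dimension with the standard Toda classes ($\beta_1$ in degree $10$, $\kappa$ in degree $14$, $\bar\mu$, $\eta\eta^*$, $\nu\kappa$ in degree $17$, and so on) and read off the composition $\nu\cdot x$ from Toda's tables. The nonzero entries of the lemma then correspond to the classical composition relations $\nu\cdot\beta_1\neq 0$ in $\mathrm{Coker}(J_{13})$ (giving the $\mathbb{Z}/3$ for $l=10$), $\nu\cdot\kappa\neq 0$ in $\mathrm{Coker}(J_{17})$ (the $\mathbb{Z}/2$ for $l=14$), and the analogous relations producing the $\mathbb{Z}/2$ entries for $l=18,19$; in every other dimension a direct inspection shows that $\nu\cdot x$ either lies in $\mathrm{Im}(J)$ or the relevant stem is trivial.

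The main obstacle is the bookkeeping in the dimensions where $\Theta_{l+3}$ has both a nontrivial $bP$-summand and a nontrivial Coker-$J$ part: one must verify not just that $\nu\cdot x$ is nonzero in $\pi_{l+3}^s$, but that it actually lifts to the claimed subgroup of $\Theta_{l+3}$ without being absorbed into $bP_{l+4}$. This is handled by the compatibility of the composition product with the Kervaire--Milnor sequence and a careful tracking of which generator of $\Theta_{l+3}$ the image hits, using the fact that the Coker-$J$ classes $\nu\cdot\beta_1$, $\nu\cdot\kappa$, etc.\ are nonzero modulo the $bP$-subgroup.
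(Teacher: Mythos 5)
Your high-level strategy (split $\Theta_l$ via Kervaire--Milnor, treat the $bP$ and $\mathrm{Coker}(J)$ parts separately, read off $\nu$-products from Toda's tables) is the right instinct and is essentially what the paper does for the hardest entry, $l=19$. However, the proposal leans on several assertions that are themselves nontrivial theorems and are not addressed.

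First, the pairing $\pi_l(Top/O)\times\pi_3^s\to\pi_{l+3}(Top/O)$ is the Bredon pairing, not literally composition with $\nu$ in $\pi_*^s$; its compatibility with the projection $\Theta_l\to\mathrm{Coker}(J_l)$ (your ``naturality'') is precisely Bredon's result, cited in the paper as \cite[Corollary 2.2]{GB67}, and the vanishing of the restriction to $bP_{l+1}$ in the one case where it could matter ($l=19$, $bP_{20}\ne 0$) is Roitberg's theorem \cite[Theorem B]{JR72} — it does not follow by inspection. Second, identifying the generator of $\pi_l(Top/O)$ with a specific Toda class is not automatic: for $l=14$, the nontrivial element corresponds to $\kappa$ \emph{or} $\kappa+\sigma^2$ in $\pi_{14}^s$, and one must invoke $\nu\sigma=0$ to see that the ambiguity is harmless; analogous care is needed at $l=16,17,18$ where $\pi_l(Top/O)$ is not simply $\mathrm{Coker}(J_l)$. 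Third, the vanishing of $\nu\cdot x$ on $\mathrm{Coker}(J_l)$ for the zero entries is not a ``direct inspection'': the paper needs genuine Toda-bracket manipulations, e.g.\ $\eta^*=\langle\eta,\sigma,2\rangle\bmod\eta\rho$, $\nu\kappa=\langle\eta\kappa,\eta,2\rangle$, $\bar\mu\in\langle\eta,16\rho,2\rangle$, together with juggling/shuffle identities, to conclude that $\nu$ annihilates those classes. Finally, the paper handles the small cases $l\le 13$ by more elementary fibration arguments involving $PL/O$, $G/O$, and $G/Top$ (using vanishing of $\pi_*(G/Top)$ in odd degrees, injectivity of $\psi_*$, and the $PL$-computations of Brumfiel), rather than the uniform Kervaire--Milnor/Bredon route; your proposal would need to be fleshed out case-by-case before it constitutes a proof.
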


\begin{proof}

As $\pi_{12}(Top/O)$ and $\pi_{16}(Top/O)_{(3)}$ are both zero, the result clearly follows for $l=9, 12, 13.$

For $l=7$ and $11,$ consider the following commutative diagram 
\begin{center}
    \begin{tikzcd}
        &{\pi_l(Top/O)}\arrow[r,"\psi_*"]\arrow[d,"\nu_l^*"']&{0}\arrow[d]\\
        0\arrow[r]&{\pi_{l+3}(Top/O)}\arrow[r,"\psi_*"']&{\pi_{l+3}(G/O)}
    \end{tikzcd}
\end{center}
where $\psi_*:\pi_{l+3}(Top/O)\to \pi_{l+3}(G/O)$ is injective because $\pi_{l+4}(G/Top)$ is zero for $l=7$ and $11.$ It follows from the above diagram that the image of $\nu_l^*: \pi_{l}(Top/O)\to \pi_{l+3}(Top/O)$ is zero for $l=7$ and $11.$

Since $\pi_l(Top/O)\cong\pi_l(PL/O)$ for $l\geq 5,$ we show that the map $\nu_8^*:\pi_8(PL/O)\to\pi_{11}(PL/O)$ is the zero map. We use the following commutative diagram
\begin{center}
    \begin{tikzcd}
        {0}\arrow[r]&{\pi_8(O)}\arrow[r]\arrow[d,"\nu_8^*"']&{\pi_8(PL)}\arrow[r]\arrow[d,"\nu_8^*"]&{\pi_8(PL/O)}\arrow[r]\arrow[d,"\nu_8^*"]&{0}\\
        {0}\arrow[r]& {\pi_{11}(O)}\arrow[r]&{\pi_{11}(PL)}\arrow[r]&{\pi_{11}(PL/O)}\arrow[r]&{0}
    \end{tikzcd}
\end{center}
where the rows are induced from the fibration $PL/O\to BO\to BPL,$ and each row breaks into a short exact sequence \cite{hirsch}. Now it follows from the above commutative diagram that if the map $\nu_8^*: \pi_8(PL)\to\pi_{11}(PL)$ is zero, then $\nu_8^*: \pi_8(PL/O)\to \pi_{11}(PL/O)$ is also a zero map.

 The fibre sequence $G/PL\to BPL\to BG$ induces the following commutative diagram
\begin{center}
     \begin{tikzcd}
         &{\pi_8(PL)}\arrow[r,"\cong"]\arrow[d,"\nu_8^*"']&{\pi_8^s}\arrow[d,"0"]\\
         {\pi_{12}(G/PL)}\arrow[r]&{\pi_{11}(PL)}\arrow[r]&{\pi_{11}^s}\arrow[r]&{0}
     \end{tikzcd}
 \end{center} where  $\pi_n(PL)\cong \pi_n(G),$ if $n\equiv 0 \pmod{4}$ \cite[Lemma 4.2]{ramesh2}, and $\nu_8^*: \pi_8^s\to\pi_{11}^s$ is a zero map from \cite{toda}. Since $\pi_{11}(PL)\cong\z\oplus\z/8$ \cite{gw2} and the kernel of $\pi_{11}(PL)\to\pi_{11}^s$ is free abelian, $\nu_8^*: \pi_8(PL)\to\pi_{11}(PL)$ is a zero map.

Since $\pi_{13}(Top/O)\cong\z/3,$ we work locally at prime $3.$ As $\pi_{t}(Top/O)\cong \pi_{t}(G/O)\cong \pi_l^s$ for $t=10,13$, and the image of $(\nu_{10})_{(3)}=\alpha_1^*:\pi_{10}^s\to\pi_{13}^s$ is $\mathbb Z/3$ \cite{toda}, the result follows for $k=10.$

 We note that the exotic sphere in $\pi_{14}(Top/O)$ corresponds to the generator $\kappa$ or $\kappa+ \sigma^2$ in $\pi_{14}^s$ \cite[Page 45]{kubo}. Since $\nu\circ\sigma=0$ \cite[Theorem 14.1]{toda}, and $\nu\kappa \in \mathrm{Coker}(J_{17})$ \cite[Table A3.3]{ravenel}, the image of $\nu_{14}^*: \pi_{14}(Top/O)\to \pi_{17}(Top/O)$ is $\mathbb Z/2.$

We observe from the left part of the following commutative diagram: 
\begin{center}
    \begin{tikzcd}
        &{\pi_{15}(Top/O)}\arrow[r,"\psi_*"]\arrow[d,"\nu_{15}^*"']&{\pi_{15}(G/O)}\arrow[d,"\nu_{15}^*"]&{\pi_{15}^s}\arrow[l,"j_*"']\arrow[d,"\nu_{15}^*"]\\
       {0}\arrow[r]&{\pi_{18}(Top/O)}\arrow[r,"\psi_*"']&{\pi_{18}(G/O)}&{\pi_{18}^s}\arrow[l,"j_*"]
    \end{tikzcd}
\end{center} that if the map $\nu_{15}^*: \pi_{15}(G/O)\to\pi_{18}(G/O)$ is zero, then the image of $\nu_{15}^*:\pi_{15}(Top/O)\to\pi_{18}(Top/O)$ must also be zero. As $j_*(\eta\kappa)$ is non-trivial in $\pi_{15}(G/O)$ \cite[Table A3.3]{ravenel}, and $(\nu_{15})_{(2)}^*(\eta\kappa)=0$ \cite[Theorem 14.1]{toda}, the image of $\nu_{15}^*: \pi_{15}(G/O)\to\pi_{18}(G/O)$ is zero.

We note that $\pi_{16}(Top/O)\cong \mathrm{Coker}(J_{16})\cong \mathbb Z/2\{\eta^*\}$ \cite[Table A3.3]{ravenel} and $\langle \eta, \sigma, 2 \rangle =\eta^* ~\text{mod} ~\eta\rho$ \cite{toda, muk4}. Using the properties of the Toda brackets \cite[Page 33]{toda}, we have $\nu_{16}^*\eta^*=0.$ This implies that $\nu_{16}^*: \pi_{16}(Top/O)\to\pi_{19}(Top/O)$ is a zero map.

We have $\pi_{17}(Top/O)\cong bP_{18}\oplus \mathrm{Coker}(J_{17})\cong \pi_{18}(G/Top)\oplus\pi_{17}(G/O)\cong \bigoplus\limits_{i=1}^4 \mathbb Z/2.$ Since $\nu_{17}^*\circ\omega_*:\pi_{18}(G/Top)\to\pi_{20}(Top/O)$ is zero, and $\omega_*: \pi_{18}(G/Top)\to\pi_{17}(Top/O)$ is injective, the restriction of $\nu_{17}^*: \pi_{17}(Top/O)\to\pi_{20}(Top/O)$ to $\omega_*(\pi_{18}(G/Top))$ is zero. We have $\mathrm{Coker}(J_{17})\cong \mathbb Z/2\{\eta\eta^*\}\oplus \mathbb Z/2\{\nu\kappa\}\oplus \mathbb Z/2\{\Bar{\mu}\}$ where $\nu\kappa=\langle \eta\kappa,\eta,2\rangle$ \cite[Page 81]{muk4} and $\Bar{\mu}\in \langle \eta, 16\rho, 2\rangle$ \cite{muk66}. Now, the result follows from the properties of the Toda bracket and $\eta\circ\nu=0$ \cite{toda}.

Since $\pi_{18}(Top/O)\cong\pi_{18}(G/O)\cong\pi_{18}^s\cong \mathbb Z/2\{\eta\Bar{\mu}\}\oplus \mathbb Z/8\{\nu^*\}$ \cite{toda}, and $\nu\circ\nu^*= \sigma^3$, with $ j_*(\sigma^3)\neq 0$ in $\pi_{21}(G/O)$ \cite{ravenel}, along with the fact that $(\Bar{\mu}\circ \eta)\circ \nu =\Bar{\mu}\circ (\eta\circ\nu)=0$ \cite[Proposition 1.3]{GB67}, it follows that the image of $\nu_{18}^*: \pi_{18}(Top/O)\to\pi_{21}(Top/O)$ is $\mathbb Z/2.$

From \cite{gw2, GB70}, we obtain $\pi_{19}(Top/O)\cong bP_{20}\oplus \mathrm{Coker}(J_{19}).$ Now \cite[Theorem B]{JR72} indicates that the restriction of the Bredon pairing $\rho_{19,3}: \pi_{19}(Top/O)\times \pi_{3}^s\to\pi_{22}(Top/O)$ to $bP_{20}$ is trivial. 

By \cite[Corollary 2.2]{GB67}, there exists a map $\mathcal{F}: \mathrm{Coker}(J_{19})\times \pi_3^s\to\mathrm{Coker}(J_{22})$ such that the following diagram commutes:
\begin{center}
    \begin{tikzcd}
         {\pi_{19}(Top/O)\times\pi_3^s}\arrow[r,"p'\times id"]\arrow[d,"\rho_{19,3}"']& {\mathrm{Coker}(J_{19})\times \pi_3^s}\arrow[r]\arrow[d,"\mathcal{F}"]&{0}\\
         {\pi_{22}(Top/O)}\arrow[r,"p'"',"\cong"]&{\mathrm{Coker}(J_{22})}
    \end{tikzcd}
\end{center}
 and $\mathcal{F}(\Bar{\sigma},\nu)= \Bar{\sigma}\circ \nu,$ which is non-trivial by \cite{ravenel}. This implies that the image of the restriction of $\rho_{19,3}:\pi_{19}(Top/O)\times\pi_3^s\to\pi_{22}(Top/O)$ to $\mathrm{Coker}(J_{19})$ is $\mathbb Z/2.$

 Therefore, the image $\rho_{19,3}:\pi_{19}(Top/O)\times\pi_3^s\to\pi_{22}(Top/O)$ is $\mathbb Z/2$, and so the map $\nu_{19}^*: \pi_{19}(Top/O)\to\pi_{22}(Top/O)$ has image $\mathbb Z/2.$ 
\end{proof}

\begin{fact}\label{detection_at_3}
The non-triviality of the map $s(\nu_{5+k})_{(3)}$ is determined by the divisibility of the first Pontryagin class $p_1(M)$ by $3$ \cite[Remark 1.2]{TSO}. Thus, $3\mid p_1(M)$ implies that $s$ must be a multiple of $3.$ 
\end{fact}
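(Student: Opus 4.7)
The plan is to $3$-localize and reduce the statement to the single equivalence $3 \mid s \Leftrightarrow 3 \mid p_1(M)$, with $s = \mathrm{ind}(M)$ in the rank $\geq 2$ case (and $s = t$ in the rank $1$ case, treated identically). After $3$-localization $\pi_3^s \cong \mathbb{Z}/3\{\alpha_1\}$, and in every degree in which $(\nu_{5+k})_{(3)}$ acts nontrivially on the relevant $\pi_\ast(Top/O)_{(3)}$ (notably $k=5$, cf.\ Lemma \ref{imagenu} and the identification $(\nu_{10})_{(3)} = \alpha_1$), the map $s\cdot(\nu_{5+k})_{(3)}$ is nonzero precisely when $s \not\equiv 0 \pmod{3}$. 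Thus the substantive content is the displayed equivalence.

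For the unpacking step, I would use the definition of $\mathrm{ind}(M)$: $s$ is the index of $\mathrm{Im}(\chi)$ inside $\mathrm{Im}(J) \subseteq \pi_3^s \cong \mathbb{Z}/24$, where $\chi = E \circ J \circ \beta_M$. After $3$-localization the target becomes $\mathbb{Z}/3$, and $(\mathrm{Im}(\chi))_{(3)}$ vanishes iff $\beta_M(x) \equiv 0 \pmod{3}$ for every $x \in H_4(M;\mathbb{Z})$. The problem thus reduces to $3 \mid p_1(M) \Leftrightarrow 3 \mid \beta_M(x)$ for all $x$.

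Next, I would relate $\beta_M$ directly to $p_1(M)$. Represent a class $x \in H_4(M;\mathbb{Z})$ by an embedded $4$-sphere $S^4 \hookrightarrow M$ (available since $M$ is $3$-connected of dimension $8$). Its normal bundle has stable classifying element $\beta_M(x)\cdot \xi \in \pi_3(\mathrm{SO})$, where $\xi$ is a fixed generator. Under the standard normalization $\langle p_1(\xi),[S^4]\rangle = \pm 2$, and using that $TS^4$ is stably parallelizable so that $p_1(M)|_{S^4} = p_1(\nu)$ by the Whitney sum formula applied to $TM|_{S^4} = TS^4 \oplus \nu$, one obtains $\langle p_1(M), x\rangle = \pm 2\,\beta_M(x)$. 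Since $2$ is a unit modulo $3$, this gives $3 \mid \beta_M(x) \Leftrightarrow 3 \mid \langle p_1(M), x\rangle$ for every $x$; as $M$ is $3$-connected and $H^4(M;\mathbb{Z})$ is free of finite rank pairing perfectly with $H_4(M;\mathbb{Z})$, this in turn is equivalent to $3 \mid p_1(M)$, closing the equivalence and yielding the second half of the statement.

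The main obstacle is pinning down the universal constant $\pm 2$ relating $p_1(\xi)$ and the generator of $H^4(S^4;\mathbb{Z})$, together with reconciling the two standard sign conventions for the splitting $\pi_3(\mathrm{SO}(4)) \cong \mathbb{Z}\oplus \mathbb{Z}$ used in Wall's classification of highly connected manifolds. Fortunately, only the $3$-adic class of this constant matters for the conclusion, and any unit modulo $3$ suffices, so the integral bookkeeping is inessential; this is exactly what makes the appeal to \cite[Remark 1.2]{TSO} clean.
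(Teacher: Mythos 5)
Your proof proposal is mathematically sound, but it is worth noting that the paper itself does not prove this ``Fact'' at all — it is stated as a citation to \cite[Remark 1.2]{TSO}. So what you have produced is a self-contained derivation where the paper simply defers to the literature, which is genuinely useful.

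Your argument hits the correct key points: (1) since the stable $J$-homomorphism $\pi_3(SO)\to\pi_3^s\cong\mathbb{Z}/24$ is surjective and $E\circ J_4$ factors through stabilization $\pi_3(SO(4))\to\pi_3(SO)$, the $3$-local triviality of $\chi$ depends only on the stable tangential invariant $\beta_M^{\mathrm{st}}(x)\in\pi_3(SO)\cong\mathbb{Z}$; (2) the number-theoretic translation is that $3\mid\mathrm{ind}(M)$ iff $\mathrm{Im}(\chi)\subseteq\mathbb{Z}/8\subset\mathbb{Z}/24$ iff $3\mid\beta_M^{\mathrm{st}}(x)$ for all $x$ (and the degenerate case $\chi=0$, $\mathrm{ind}(M)=0$ is also a multiple of $3$, consistent with $p_1$ being highly divisible); and (3) the bridge to $p_1(M)$ via $TM|_{S^4}=TS^4\oplus\nu_x$ with $TS^4$ stably trivial, giving $\langle p_1(M),x\rangle=\pm 2\,\beta_M^{\mathrm{st}}(x)$, where $2$ is a unit mod $3$. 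Since $H^4(M;\mathbb{Z})$ is free (as $M$ is $3$-connected) and pairs perfectly with $H_4(M;\mathbb{Z})$, divisibility of the class $p_1(M)$ by $3$ is equivalent to divisibility of all evaluations $\langle p_1(M),x\rangle$, closing the loop. The rank-$1$ case with $s=t$ goes through verbatim, as you indicate, since the same attaching map interpretation applies.

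The only thing I would flag is the hedge in your final paragraph about the universal constant $\pm 2$: for this argument all you need is that the generator of $\pi_3(SO)$ corresponds to $p_1=\pm 2$ times a generator of $H^4(S^4;\mathbb{Z})$, which is standard (the quaternionic Hopf bundle has $p_1=\mp 2u$), and you correctly observe that its $3$-adic unit status is all that matters. There is no gap there, just an understated appeal to a classical normalization. Overall this is a correct, more elementary replacement for the citation in the paper.
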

Using \eqref{inertia_description}, Lemma \ref{imagenu} and the above fact, we determine $I_c(M\times\mathbb{S}^k)$ for certain values of $k$ in the following theorem.
\begin{thm}\label{ConcorMtimesSk}
Let $M$ be a closed, $3$-connected, smooth, $8$-manifold of rank $m\geq1.$ Then
\begin{itemize}
    \item[(i)] $I_c(M\times\s^k)=0$ for $1\leq k\leq 14,k \neq 5,9,13$ and $14.$ 
    \item[(ii)] For $k=5:$
    \begin{itemize}
        \item[(a)] $I_c(M\times\s^5)=0,$ if $3\mid p_1(M).$ 
        \item[(b)] $I_c(M\times\s^5)=\mathbb{Z}/3,$ if $3 \nmid p_1(M).$
    \end{itemize}
    \item[(iii)] For $k=9,13,14:$
    \begin{itemize}
      \item[(a)] If $m=1,$ then $I_c(M\times\mathbb{S}^k)=\mathbb{Z}/2.$
           \item[(b)] If $m\geq2,$ then
       \begin{enumerate}
           \item $I_c(M\times\mathbb{S}^k)=0$ for even $\textrm{ind}(M).$
           \item $I_c(M\times\mathbb{S}^k)=\mathbb{Z}/2$ for odd $\textrm{ind}(M)$.
       \end{enumerate}
    \end{itemize}
\end{itemize}
\end{thm}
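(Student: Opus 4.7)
The plan is to combine the description \eqref{inertia_description} with the table in Lemma \ref{imagenu} and the Pontryagin class criterion in Fact \ref{detection_at_3}. By \eqref{inertia_description}, $I_c(M\times\mathbb{S}^k)$ is the image of $(s\nu_{5+k})^{*}\colon\Theta_{5+k}\to\Theta_{8+k}$, where $s\in\mathbb{Z}/24$ equals $\mathrm{ind}(M)$ when $m\geq 2$ and equals $t$ when $m=1$. Because this map is the $s$-fold multiple of the group homomorphism $(\nu_{5+k})^{*}$, its image is determined by $\mathrm{Im}((\nu_{5+k})^{*})$ together with the residue of $s$ modulo the order of that image.

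For part (i) I would read off Lemma \ref{imagenu}: the values $k\in\{2,3,4,6,7,8,10,11,12\}$ correspond to $5+k\in\{7,8,9,11,12,13,15,16,17\}$, precisely the rows with $\mathrm{Im}((\nu_{5+k})^{*})=0$; the case $k=1$ is immediate since $\Theta_6=0$. For part (ii), at $k=5$ Lemma \ref{imagenu} gives $\mathrm{Im}((\nu_{10})^{*})=\mathbb{Z}/3$, so $I_c(M\times\mathbb{S}^5)$ equals $\mathbb{Z}/3$ exactly when $3\nmid s$ and is trivial when $3\mid s$; by Fact \ref{detection_at_3} this dichotomy coincides with the divisibility of $p_1(M)$ by $3$.

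For part (iii) the values $5+k\in\{14,18,19\}$ all satisfy $\mathrm{Im}((\nu_{5+k})^{*})=\mathbb{Z}/2$, so $I_c$ is $\mathbb{Z}/2$ when $s$ is odd and trivial when $s$ is even. For $m\geq 2$ this is exactly the parity dichotomy on $\mathrm{ind}(M)$ stated in (iii)(b). For $m=1$ it remains to show $t$ is always odd. My approach is to use that any rank $1$ closed $3$-connected $8$-manifold is projective plane-like, so $H^{*}(M;\mathbb{Z})\cong\mathbb{Z}[x]/(x^3)$; the requirement that $x\cup x$ generate $H^{8}$ forces the Hopf invariant of the unstable attaching map $g\in\pi_{7}(S^{4})\cong\mathbb{Z}\{\nu\}\oplus\mathbb{Z}/12\{\Sigma\alpha\}$ to be $\pm 1$. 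Under suspension $E\colon\pi_{7}(S^{4})\to\pi_{7}^{s}\cong\mathbb{Z}/24\{\nu_{s}\}$, $\nu$ maps to $\nu_{s}$ while $\Sigma\alpha$ maps to $2\nu_{s}$, so Hopf invariant $\pm 1$ forces $t$ to be odd, yielding $I_c(M\times\mathbb{S}^k)=\mathbb{Z}/2$.

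The main obstacle is the rank $1$ parity step: extracting "$t$ is odd" from the projective-plane-like hypothesis requires the Hopf-invariant computation together with the explicit behaviour of $E$ on the torsion summand of $\pi_{7}(S^{4})$. The remaining verifications reduce to mechanical lookups against Lemma \ref{imagenu} and Fact \ref{detection_at_3}.
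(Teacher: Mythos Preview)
Your argument is correct and follows essentially the same route as the paper: both combine \eqref{inertia_description} with Lemma~\ref{imagenu} and Fact~\ref{detection_at_3}, and for the rank~$1$ case both show that the stable attaching map is an odd multiple of~$\nu$. The only cosmetic difference is that the paper phrases the rank~$1$ parity step via the Steenrod square $Sq^4\colon H^4(M;\mathbb{Z}/2)\to H^8(M;\mathbb{Z}/2)$ being an isomorphism, whereas you extract it from the Hopf invariant of the unstable attaching map; these are equivalent, since $Sq^4$ detects exactly the mod~$2$ Hopf invariant.
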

\begin{proof}
We observe that if the rank of the manifold $M$ is $1,$ then the operation $Sq^4: H^4(M;\mathbb{Z}/2)\to H^8(M;\mathbb{Z}/2)$ is an isomorphism. Consequently, $(\Sigma^{k+1}g)_{(2)}$ is an odd multiple of $(\nu_{5+k})_{(2)}\in \mathbb{Z}/8.$ Combining this observation with Fact \ref{detection_at_3}, Lemma \ref{imagenu}, the results for $m=1$ follow from \eqref{inertia_description}.  

For $m\geq 2,$ the concordance inertia group of $M\times\mathbb{S}^k$ is determined from \eqref{inertia_description} using the Fact \ref{detection_at_3}, and Lemma \ref{imagenu}.
\end{proof}

From \cite{ravenel}, we observe that for $k=15,31,67,76,86,87$ and $96,$ there exists $x\in \mathrm{Coker}(J_{5+k})$ such that $(\nu_{5+k})_{(3)}\circ x =\alpha_1\circ x \neq 0 $ in $\mathrm{Coker}(J_{8+k}).$ From this, we deduce:
\begin{cor}
Let $M$ be a closed, $3$-connected, smooth, $8$-manifold with $3\nmid p_1(M).$ Then $\mathbb{Z}/3\subseteq I_c(M\times\mathbb{S}^k)$ for  $k=15,31,67,76,86,87$ and $96.$
\end{cor}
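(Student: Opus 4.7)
The plan is to apply the description in \eqref{inertia_description} of the concordance inertia group as the image of composition with $s\nu_{5+k}$ from $\Theta_{5+k}$ into $\Theta_{8+k}$, and then localize at the prime $3$. By Fact \ref{detection_at_3}, the hypothesis $3\nmid p_1(M)$ forces $s\in\mathbb{Z}/24$ to be a unit modulo $3$, so $(s\nu_{5+k})_{(3)}$ differs from $(\nu_{5+k})_{(3)}=\alpha_1$ by a $3$-local unit. It therefore suffices to exhibit, for each listed $k$, a class in $\pi_{5+k}(Top/O)_{(3)}$ whose image under $\alpha_1^{*}$ has order $3$ in $\pi_{8+k}(Top/O)_{(3)}$.

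To pull this back to pure stable homotopy, I would invoke the commutative square
\[
\begin{tikzcd}
\pi_{5+k}(Top/O) \arrow[r,"\alpha_1^{*}"]\arrow[d,"\pi"'] & \pi_{8+k}(Top/O)\arrow[d,"\pi"]\\
\mathrm{Coker}(J_{5+k}) \arrow[r,"\alpha_1\cdot(-)"'] & \mathrm{Coker}(J_{8+k}),
\end{tikzcd}
\]
whose verticals are the Kervaire--Milnor surjections $\Theta_l\twoheadrightarrow \mathrm{Coker}(J_l)$ (each of the listed dimensions $5+k$ is $\not\equiv 2\pmod 4$, so the quotient map is actually surjective), and whose commutativity is naturality of the composition pairing under Pontryagin--Thom. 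The sentence preceding the corollary asserts that for each $k\in\{15,31,67,76,86,87,96\}$ there is a class $x\in\mathrm{Coker}(J_{5+k})_{(3)}$ with $\alpha_1\cdot x\neq 0$ in $\mathrm{Coker}(J_{8+k})_{(3)}$. Choose any lift $\tilde{x}\in \pi_{5+k}(Top/O)_{(3)}$. Then $\pi(\alpha_1^{*}\tilde{x})=\alpha_1\cdot x\neq 0$, so $\alpha_1^{*}\tilde{x}$ has order divisible by $3$; multiplying by the integer coprime to $3$ that cancels its non-$3$-primary part yields an element of exact order $3$ inside $\mathrm{Im}(\alpha_1^{*})=I_c(M\times\mathbb{S}^k)_{(3)}$, giving the desired $\mathbb{Z}/3$ subgroup.

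I expect the main obstacle to be purely computational, namely the verification inside Ravenel's $3$-local charts \cite{ravenel} that each of the seven specified dimensions really does carry a class $x\in\mathrm{Coker}(J_{5+k})_{(3)}$ whose product with $\alpha_1$ survives as a nonzero element of $\mathrm{Coker}(J_{8+k})_{(3)}$. Once this input is granted, the remainder of the argument is just naturality of composition together with the elementary divisor principle for finite abelian $3$-groups, and no new geometric or homotopy-theoretic input is needed beyond what has already been developed for Theorem~\ref{ConcorMtimesSk}.
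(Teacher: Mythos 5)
Your proposal is correct and follows essentially the same route as the paper: the paper itself gives no explicit proof beyond the sentence preceding the corollary recording the Ravenel input, and your argument supplies exactly the missing details (localizing \eqref{inertia_description} at $3$ using Fact \ref{detection_at_3}, comparing $\alpha_1^{*}$ on $\pi_{*}(Top/O)$ with $\alpha_1\cdot(-)$ on $\mathrm{Coker}(J_{*})$ via the Bredon pairing, and lifting the nonvanishing product). One stylistic note: $3$-locally the quotient $\Theta_l\to\mathrm{Coker}(J_l)$ is automatically surjective since the Kervaire obstruction is $2$-primary, so the aside about $5+k\not\equiv 2\pmod 4$ is unnecessary, though harmless.
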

As $I_c(M\times\mathbb{S}^k)\subseteq I(M\times\mathbb{S}^k)\subseteq \Theta_{8+k},$ we have the following from Theorem \ref{ConcorMtimesSk}:
\begin{rmk}
    For any closed, $3$-connected, $8$-dimensional smooth manifold $M$ with $3\nmid p_1(M),$ the inertia group of $M\times\mathbb{S}^5$ is $\mathbb{Z}/3.$ In particular, $I(\mathbb{H}P^2\times\mathbb{S}^5)=\mathbb{Z}/3.$ 
\end{rmk}

Since $M\simeq \left(\bigvee\limits_{i=1}^m\mathbb{S}^4\right)\cup_{g}\mathbb{D}^8$, we have $[M, Top/O]= \pi_8(Top/O)$. Hence, by Corollary \ref{concorMtimesSk}, it suffices to compute  $[\Sigma^k M, Top/O]$ to determine $\mathcal{C}(M\times\s^k).$ Rather than finding $\mathcal{C}(M\times\mathbb{S}^k)$, we first focus on calculating $\mathcal{C}(\mathbb{H}P^2\times\mathbb{S}^k)$ and thereby computing $[\Sigma^k\mathbb{H}P^2, Top/O].$ To compute $[\Sigma^k\mathbb{H}P^2, Top/O],$ we repeatedly use the following long exact sequence \begin{equation}\label{longhP^2}
     \cdots\to\Theta_{5+k}\xrightarrow{\nu_{5+k}^*}\Theta_{8+k}\xrightarrow{(\Sigma^k f_{\h P^2})}[\Sigma^k \h P^2,Top/O]\xrightarrow{(\Sigma^k i)^*}\Theta_{4+k}\xrightarrow{\nu_{4+k}^*}\Theta_{7+k}\to\cdots
 \end{equation} induced from the cofibre sequence 
 \begin{equation}\label{cofihP^2}
     \cdots\to\s^{7+k}\xrightarrow{\nu_{4+k}}\s^{4+k}\xhookrightarrow{\Sigma^k i}\Sigma^k \h P^2\xrightarrow{\Sigma^k f_{\h P^2}}\s^{8+k}\xrightarrow{\nu_{5+k}}\s^{5+k}\to\cdots
 \end{equation}
 In the following proposition, we formulate $[\Sigma^k \h P^2,Top/O]$ for $1\leq k\leq 10.$
\begin{prop}\label{Concordancehp2sk}
\hspace{2em}
\begin{enumerate}
    \item[(i)] $[\Sigma^k\h P^2,Top/O]\cong\Theta_{8+k}$ for $k=1,2,8,10.$
    \item[(ii)] $[\Sigma^k\h P^2,Top/O]\cong\Theta_{4+k}$ for $k=4,5.$
    \item[(iii)] There exists a split short exact sequence $0\to\Theta_{11}\xrightarrow{(\Sigma^3 f_{\h P^2})^*}[\Sigma^3\h P^2,Top/O]\xrightarrow{(\Sigma^3 i)^*}\Theta_7\to0.$
    \item[(iv)] There exists a short exact sequence $0\to\mathbb{Z}/2\xrightarrow{(\Sigma^6 f_{\h P^2})^*}[\Sigma^6\h P^2,Top/O]\xrightarrow{(\Sigma^6 i)^*}\mathbb{Z}/2\to0,$ where $\mathbb{Z}/2=\Theta_{14}$ on the left side is the cokernel of $\nu_{11}^*$ and $\mathbb{Z}/2\subset \Theta_{10}$ on the right side is the kernel of $\nu_{10}^*.$ 
    \item[(v)] There exists a split short exact sequence $0\to\Theta_{15}\xrightarrow{(\Sigma^7 f_{\h P^2})^*}[\Sigma^7\h P^2,Top/O]\xrightarrow{(\Sigma^7 i)^*}\Theta_{11}\to0,$ where $\Theta_{11}$ is the kernel of $\nu_{11}^*.$
     \item[(vi)] There exists a split short exact sequence $0\to\mathbb{Z}/2\oplus\mathbb{Z}/2\oplus\mathbb{Z}/2\to[\Sigma^9\h P^2,Top/O]\to\mathbb{Z}/3\to0$ where $\z/2\oplus\z/2\oplus\z/2 \subset \Theta_{17}$ is the cokernel of $\nu_{14}^*$ and $\mathbb{Z}/3$ is the kernel of $\nu_{13}^*.$
\end{enumerate}
\end{prop}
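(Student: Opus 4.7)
The strategy is uniform across all cases: for each $k \in \{1,\ldots,10\}$ I substitute into the long exact sequence \eqref{longhP^2} induced by the cofibration \eqref{cofihP^2}, and then read off the image and kernel of the relevant $\nu^{*}$-maps using Lemma \ref{imagenu} together with the Kervaire--Milnor tables for $\Theta_{n}$. In each case this reduces the computation to the short exact sequence
\[
0 \to \operatorname{coker}\bigl(\nu_{5+k}^{*}\colon \Theta_{5+k}\to \Theta_{8+k}\bigr) \to [\Sigma^{k}\h P^{2},Top/O] \to \ker\bigl(\nu_{4+k}^{*}\colon \Theta_{4+k}\to \Theta_{7+k}\bigr) \to 0.
\]

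For parts (i), (ii), and (iv) the claim follows at once from identifying the outer groups. Specifically, $\Theta_{5}=\Theta_{6}=0$ handles $k=1,2$; $\Theta_{12}=0$ handles $k=4$ and $k=8$; Lemma \ref{imagenu} gives $\operatorname{Im}(\nu_{10}^{*})=\Theta_{13}$ and $\operatorname{Im}(\nu_{14}^{*})=\Theta_{14}$, which dispose of $k=5$ and $k=10$ respectively; and for $k=6$ both outer groups simplify to $\mathbb{Z}/2$. For part (vi), the outer groups turn out to be $(\mathbb{Z}/2)^{3}$ and $\mathbb{Z}/3$, so the sequence splits automatically from coprime orders.

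The genuine obstacle is the splitting asserted in (iii) and (v), where the cokernel and the kernel have orders sharing a factor of $2$, so splitting is not automatic. In both cases Lemma \ref{imagenu} shows that $\nu_{4+k}^{*}$ and $\nu_{5+k}^{*}$ vanish simultaneously, so the obstruction $\alpha\circ\nu_{4+k}=\nu_{4+k}^{*}(\alpha)$ to extending any $\alpha\in\Theta_{4+k}$ across $\Sigma^{k}\h P^{2}$ is zero; this already yields a set-theoretic section of $(\Sigma^{k}i)^{*}$. My plan to upgrade this to a group-theoretic splitting is to fix an extension $\tilde{\alpha}$ of a generator $\alpha$ of the cyclic group $\Theta_{4+k}$ and verify, one prime at a time, that $\tilde{\alpha}$ can be chosen of the same order as $\alpha$ in $[\Sigma^{k}\h P^{2},Top/O]$.

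Working prime by prime is the cleanest route: away from $\{2,3\}$ the attaching map $\nu_{4+k}$ is null-homotopic, so $\Sigma^{k}\h P^{2}$ splits as a wedge $\s^{4+k}\vee\s^{8+k}$ and the sequence is obviously split; at the prime $3$ the relevant groups $\Theta_{7}$ and $\Theta_{11}$ vanish, so the $3$-local sequence is trivial; and at the prime $2$ I would combine Brumfiel's section of $\Theta_{4j-1}\twoheadrightarrow\operatorname{Coker}(J_{4j-1})$ (available here since $\Theta_{7}$ and $\Theta_{11}$ are entirely $bP$-groups) with the vanishing of $\nu^{*}$ established in Lemma \ref{imagenu} to trivialize the $2$-local Ext class, which completes the splitting.
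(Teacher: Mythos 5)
Your reduction to the two-sided short exact sequence
\[
0 \to \operatorname{coker}(\nu_{5+k}^{*}) \to [\Sigma^{k}\h P^{2},Top/O] \to \ker(\nu_{4+k}^{*}) \to 0
\]
via \eqref{longhP^2} and Lemma \ref{imagenu} is exactly what the paper does, and your identifications of the outer groups for $k=1,2,4,5,6,8,9,10$ are correct; in particular, parts (i), (ii), (iv), and (vi) go through, with (vi) splitting for the coprime-order reason you cite. You also correctly isolate the genuine difficulty: the $2$-primary splitting in (iii) and (v), and your prime-by-prime reduction (trivial away from $2$ and $3$ since $\nu$ has order $24$, trivial at $3$ because the relevant groups have no $3$-torsion) is a sound framework.

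The gap is in the $2$-local step. Your sentence ``combine Brumfiel's section of $\Theta_{4j-1}\twoheadrightarrow\operatorname{Coker}(J_{4j-1})$ ... with the vanishing of $\nu^{*}$ ... to trivialize the $2$-local Ext class'' is not an argument. First, the parenthetical observation that $\Theta_{7}$ and $\Theta_{11}$ are $bP$-groups means $\operatorname{Coker}(J_{7})=\operatorname{Coker}(J_{11})=0$, so Brumfiel's section is vacuous there and cannot supply a retraction onto the quotient; it gives no information about whether a lift of a generator of $\Theta_{4+k}$ can be chosen of the same order inside $[\Sigma^{k}\h P^{2},Top/O]_{(2)}$. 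Second, the vanishing of $\nu^{*}$ only gives surjectivity of $(\Sigma^{k}i)^{*}$ (hence a set-theoretic section), which you already noted; it says nothing about the extension class in $\operatorname{Ext}^{1}(\Theta_{4+k},\,\operatorname{coker}(\nu_{5+k}^{*}))_{(2)}$. This is precisely where the real work lies. The paper handles (iii) by chasing the ladder of exact sequences coming from the fibration $\Omega(G/Top)\to Top/O\to G/O\to G/Top$ applied to $\Sigma^{3}\h P^{2}$ and $\Sigma^{4}\h P^{2}$, computing that the image of $\omega_{*}\colon[\Sigma^{4}\h P^{2},G/Top]\to[\Sigma^{3}\h P^{2},Top/O]$ is exactly $\z/992\oplus\z/28$, which forces the splitting. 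For (v), the paper argues $2$-locally by contradiction: if the sequence did not split, $[\Sigma^{7}\h P^{2},Top/O]_{(2)}$ would be $\z/2^{6}\oplus\z/2^{6}$ or $\z/2\oplus\z/2^{11}$, and this contradicts the surjectivity of $\psi_{*}$ onto $[\Sigma^{7}\h P^{2},G/O]_{(2)}\cong\z/2$ after computing the relevant images in the $G/Top$-row. Without some such diagram chase (or an alternative explicit computation of the extension class), your proof of (iii) and (v) is incomplete.

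Two smaller points: in (v) the cokernel side is $\Theta_{15}$, not $\Theta_{11}$, and $\Theta_{15}$ is not a pure $bP$-group (it has $\operatorname{Coker}(J_{15})=\z/2$), so the ``entirely $bP$'' framing does not even describe the groups correctly for that case. And in (iv), you should not list the extension among the cases that ``follow at once''; the paper deliberately leaves that extension unresolved, and your uniform short-exact-sequence reduction produces the sequence but does not determine whether it splits -- which is consistent with the statement, but worth flagging as unresolved rather than ``disposed of.''
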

\begin{proof}
The proofs of {(i)} and {(ii)} follow from the long exact sequence (\ref{longhP^2}) and Lemma \ref{imagenu}.

{(iii)} Since the images of $\nu_8^*:\Theta_8\to\Theta_{11}$ and $\nu_7^*:\Theta_7\to\Theta_{10}$ are both zero, we have the following short exact sequence from the long exact sequence (\ref{longhP^2}) 
\begin{equation}\label{eqn17}
 0\to\Theta_{11}\xrightarrow{(\Sigma^3 f_{\h P^2})^*}[\Sigma^3\h P^2,Top/O]\xrightarrow{(\Sigma^3 i)^*}\Theta_7\to0.   
\end{equation}
We now show that this short exact sequence splits. Consider the following ladder of exact sequences 
\begin{center}
    \begin{tikzcd}
             &&{0}\arrow[d]&{0}\arrow[d]\\
             &{0}\arrow[r]\arrow[d]&{[\Sigma^4\h P^2,Top/O]}\arrow[r,"\cong","(\Sigma^4 i)^*"']\arrow[d,"\psi_*"]&{\z/2}\arrow[r,"0"]\arrow[d,"\psi_*"]&{\z/{992}}\arrow[d]\\
             {\z/2\oplus\z/2}\arrow[r,"0"]\arrow[d]&{\z}\arrow[r,"(\Sigma^4 f_{\h P^2})^*"]\arrow[d,"\phi_*"']&{[\Sigma^4\h P^2,G/O]}\arrow[r,"(\Sigma^4 i)^*"]\arrow[d,"\phi_*"]&{\z\oplus\z/2}\arrow[r]\arrow[d,"\phi_*"]&{0}\arrow[d]\\
             {0}\arrow[r]\arrow[d]&{\z}\arrow[r,"(\Sigma^4 f_{\h P^2})^*"]\arrow[d,"\omega_*"']&{\z\oplus\z}\arrow[r,"(\Sigma^4 i)^*"]\arrow[d,"\omega_*"]&{\z}\arrow[r]\arrow[d,"\omega_*"]&{0}\arrow[d]\\
             {\z/2}\arrow[r,"0"]&{\z/{992}}\arrow[r,"(\Sigma^3 f_{\h P^2})^*"']\arrow[d]&{[\Sigma^3\h P^2,Top/O]}\arrow[r,"(\Sigma^3 i)^*"']\arrow[d]&{\z/{28}}\arrow[r,"0"]\arrow[d]&{\z/6}\\
             &{0}&{0}&{0}
    \end{tikzcd}
\end{center} where the columns are induced from the fibre sequence $\cdots\Omega(G/Top)\xrightarrow{\omega}Top/O\xrightarrow{\psi}G/O\xrightarrow{\phi}G/Top,$ the rows are induced from the cofibre sequence \eqref{cofihP^2}, and the second row from below splits as $Ext_{\z}^1(\z,\z)=0.$ From the top part of the diagram, we observe that $[\Sigma^4\h P^2, G/O]$ is either $\z\oplus\z$ or $\z\oplus\z\oplus\z/2.$ Since $\psi_*:[\Sigma^4\h P^2,Top/O]\to[\Sigma^4\h P^2,G/O]$ is injective, $[\Sigma^4\h P^2,G/O]\cong\z\oplus\z\oplus\z/2.$ As the image of $(\Sigma^4 f_{\h P^2})^*\circ \phi_*:\pi_{12}(G/O)\to[\Sigma^4 \h P^2, G/Top]$ is $992\z,$ and that of $\phi_*\circ (\Sigma^4 i)^*:[\Sigma^4 \h P^2, G/O]\to\pi_8(G/Top)$ is $28\z,$ it follows from the diagram above that the image of $w_*:[\Sigma^4\h P^2, G/Top]\to[\Sigma^3\h P^2, Top/O]$ is $\z/{992}\oplus\z/{28}.$ This implies that the short exact sequence \eqref{eqn17} splits.

{(iv)} The proof follows from the long exact sequence (\ref{longhP^2}) and Theorem \ref{ConcorMtimesSk}.

{(v)} From the long exact sequence (\ref{longhP^2}) and Theorem \ref{ConcorMtimesSk}, we obtain the following short exact sequence 
\begin{equation}\label{eqn18}
 0\to\Theta_{15}\xrightarrow{(\Sigma^7 f_{\h P^2})^*}[\Sigma^7\h P^2,Top/O]\xrightarrow{(\Sigma^7 i)^*}\Theta_{11}\to0  
\end{equation} 
This short exact sequence indicates that it is sufficient to work locally at prime $2.$ Since $\nu_{12}^*: \pi_{12}(G/O)\to\pi_{15}(G/O)$ is the zero map and $\pi_{2t+1}(G/Top)=0$ for all $t\geq 0$, the long exact sequence induced from the cofibre sequence \eqref{cofihP^2} along $G/Top$ and $G/O,$ respectively, yields the following short exact sequences: 
  \begin{equation}\label{sigmahp2gtop}
      0\to\z\to[\Sigma^8\h P^2,G/Top]\to\z\to0,
  \end{equation}
  and 
  \begin{equation}\label{sigmahp2go}
   0\to \mathbb{Z}\oplus\mathbb{Z}/2\to [\Sigma^8\mathbb{H}P^2, G/O]\to \mathbb{Z}\to 0.   
  \end{equation}
Clearly, both short exact sequences split.

Suppose \eqref{eqn18} does not split. Then this implies that $[\Sigma^7\mathbb{H}P^2, Top/O]_{(2)}$ is either $\mathbb{Z}/{2^6}\oplus\mathbb{Z}/{2^6}$ or $\mathbb{Z}/2\oplus\mathbb{Z}/{2^{11}}.$ Now consider the following commutative diagram where each row and column is an exact sequence induced from \eqref{cofihP^2} and fibre sequence $\cdots\Omega(G/Top)\xrightarrow{\omega}Top/O\xrightarrow{\psi}G/O\xrightarrow{\phi}G/Top$ respectively.
\begin{center}
    \begin{tikzcd}
             &{0}\arrow[d]&{0}\arrow[d]\\
             {0}\arrow[r]\arrow[d]&{\z/2}\arrow[r,"(\Sigma^8 f_{\h P^2})_{(2)}^*","\cong"']\arrow[d,"(\psi_*)_{(2)}"']&{[\Sigma^8\h P^2,Top/O]_{(2)}}\arrow[r,"(\Sigma^8 i)_{(2)}^*"]\arrow[d,"(\psi_*)_{(2)}"]&{0}\arrow[r]\arrow[d,"(\psi_*)_{(2)}"]&{\Theta_{15}}\arrow[d]\\
             {0}\arrow[r]\arrow[d]&{\z_{(2)}\oplus\z/2}\arrow[r,"(\Sigma^8 f_{\h P^2})_{(2)}^*"]\arrow[d,"(\phi_*)_{(2)}"']&{[\Sigma^8\h P^2,G/O]_{(2)}}\arrow[r,"(\Sigma^8 i)_{(2)}^*"]\arrow[d,"(\phi_*)_{(2)}"]&{\z_{(2)}}\arrow[r,"(\nu_{12})_{(2)}^*"]\arrow[d]&{\z/2}\arrow[d]\\
             {0}\arrow[r]\arrow[d]&{\z_{(2)}}\arrow[r,"(\Sigma^8 f_{\h P^2})_{(2)}^*"]\arrow[d,"(\omega_*)_{(2)}"']&{\z_{(2)}\oplus\z_{(2)}}\arrow[r,"(\Sigma^8 i)_{(2)}^*"]\arrow[d,"(\omega_*)_{(2)}"]&{\z_{(2)}}\arrow[r]\arrow[d,"(\omega_*)_{(2)}"]&{0}\arrow[d]\\
             {0}\arrow[r]\arrow[d]&{\z/2\oplus\z/{2^6}}\arrow[r,"(\Sigma^8 f_{\h P^2})_{(2)}^*"']\arrow[d,"(\psi_*)_{(2)}"']&{[\Sigma^7\h P^2,Top/O]_{(2)}}\arrow[r,"(\Sigma^7 i)_{(2)}^*"']\arrow[d,"(\psi_*)_{(2)}"]&{\z/{2^5}}\arrow[r,"0"]\arrow[d]&{\z/2}\\
             {\z_{(2)}}\arrow[r,"(\nu_{12})_{(2)}^*"']&{\z/2}\arrow[r,"(\Sigma^8 f_{\h P^2})_{(2)}^*"']\arrow[d]&{[\Sigma^7\h P^2,G/O]_{(2)}}\arrow[r,"(\Sigma^7 i)_{(2)}^*"']\arrow[d]&{0}\\
             &{0}&{0}
    \end{tikzcd}
\end{center}
Since \eqref{sigmahp2gtop} and \eqref{sigmahp2go} both split, we observe from the above diagram that the image of $(\phi_*)_{(2)}: [\Sigma^8\mathbb{H}P^2, G/O]\to[\Sigma^8\mathbb{H}P^2, G/Top]$ is $2^6\mathbb{Z}_{(2)}\oplus 2^5\mathbb{Z}_{(2)}.$ Thus, if $[\Sigma^7 \h P^2,Top/O]_{(2)} $ is either $\mathbb{Z}/{2}\oplus\mathbb{Z}/{2^{11}}$ or $\mathbb{Z}/{2^6}\oplus\mathbb{Z}/{2^6},$ then the image of $(\omega_*)_{(2)}: [\Sigma^8\mathbb{H}P^2, G/Top]_{(2)}\to[\Sigma^7\mathbb{H}P^2, Top/O]_{(2)}$ has order $2^6.$ However, this contradicts that $(\psi_*)_{(2)}:[\Sigma^7\mathbb{H}P^2, Top/O]_{(2)}\to [\Sigma^7\mathbb{H}P^2, G/O]_{(2)}\cong \mathbb{Z}/2$ is onto. Therefore, the short exact sequence \eqref{eqn18} splits. 

{(vi)} From Lemma \ref{imagenu}, we know that $\nu_{14}^*: \Theta_{14} \to \Theta_{17}$ has image $\mathbb{Z}/2$, while $\nu_{13}^* : \Theta_{13} \to \Theta_{16}$ is the zero map. Hence, from the long exact sequence \eqref{longhP^2}, we obtain the following short exact sequence: $$0\to\z/2\oplus\z/2\oplus\z/2\to[\Sigma^9 \h P^2,Top/O]\to\z/3\to0,$$ which clearly splits.
\end{proof}	

For any closed, $3$-connected, smooth $8$-dimensional manifold $M$, it is clear from the stable decompositions \eqref{stable_homotopy_type} and \eqref{stable_homotopy_type_rank1} that in order to compute $[\Sigma^k M, Top/O],$ it is sufficient to compute $[Cone(s\nu_{4+k}), Top/O],$ where $s\in \mathbb{Z}/{24}$ is given by $\textrm{ind}(M)$ from \eqref{stable_homotopy_type} or by $t$ from \eqref{stable_homotopy_type_rank1}. Note that $[Cone(s\nu_{4+k}), Top/O]$ fits into the following long exact sequence 
\begin{equation}\label{longM^8n}
    \cdots\to\Theta_{5+k}\xrightarrow{(s\nu_{5+k})^*}\Theta_{8+k}\xrightarrow{}[Cone(s\nu_{4+k}), Top/O]\xrightarrow{}\Theta_{4+k}\xrightarrow{(s\nu_{4+k})^*}\Theta_{7+k}\to\cdots
\end{equation} By applying the same method as in Proposition \ref{Concordancehp2sk}, we compute $[Cone(s\nu_{4+k}), Top/O]$ from the long exact sequence \eqref{longM^8n} using Theorem \ref{ConcorMtimesSk}. Combining the computation of $[Cone(s\nu_{4+k}), Top/O]$ for $1\leq k\leq 10$ with \eqref{stable_homotopy_type} for rank $m\geq2$ and \eqref{stable_homotopy_type_rank1} for rank $m=1$, we obtain the following:
\begin{prop}\label{msk_rank2}
    Let $M$ be a $3$-connected closed, smooth, $8$-manifold of rank $m\geq 1.$ Then
    \begin{itemize}
        \item[(a)] $[\Sigma^k M, Top/O]=\Theta_{8+k}$ for $k=1,2$ and $8.$
        \item[(b)] $[\Sigma^4 M, Top/O]= \bigoplus\limits_{i=1}^m\Theta_{8}.$ 
        \item[(c)] $[\Sigma^k M, Top/O]= \Theta_{8+k}\oplus \bigoplus\limits_{i=1}^m \Theta_{4+k}$   for $k=3$ and $7.$ 
       \item[(d)] For $k=5:$
       \begin{itemize}
           \item[(i)] If $3\mid p_1(M),$ then $[\Sigma^5 M, Top/O]= \Theta_{13}\oplus\bigoplus\limits_{i=1}^m \Theta_9.$
           \item[(ii)] If $3\nmid p_1(M),$ then $[\Sigma^5 M, Top/O]= \bigoplus\limits_{i=1}^m \Theta_9.$
       \end{itemize}
       \item[(e)] For $k=6:$
       \begin{itemize}
           \item[(i)] If $3\mid p_1(M),$ then $[\Sigma^6 M, Top/O]$ is either $\mathbb{Z}/2\oplus\mathbb{Z}/2\oplus\mathbb{Z}/3\oplus\bigoplus\limits_{i=1}^{m-1}\Theta_{10},$ or $\mathbb{Z}/4\oplus\mathbb{Z}/3\oplus\bigoplus\limits_{i=1}^{m-1}\Theta_{10}.$
           \item[(ii)] If $3\nmid p_1(M),$ then $[\Sigma^6 M, Top/O]$ is either $\mathbb{Z}/2\oplus\mathbb{Z}/2\oplus\bigoplus\limits_{i=1}^{m-1}\Theta_{10},$ or $\mathbb{Z}/4\oplus\bigoplus\limits_{i=1}^{m-1}\Theta_{10}.$
       \end{itemize}
        \item[(f)] For $k=9:$
        \begin{enumerate}
            \item If $m=1,$ then $[\Sigma^9 M, Top/O]= \bigoplus\limits_{i=1}^3\mathbb{Z}/2\oplus\Theta_{13}.$
            \item  If $m\geq 2:$
             \begin{itemize}
            \item[(i)] $[\Sigma^9 M, Top/O]= \Theta_{17}\oplus\bigoplus\limits_{i=1}^m\Theta_{13},$ for even $\textrm{ind}(M)$.
            \item[(ii)] $[\Sigma^9 M, Top/O]=\bigoplus\limits_{i=1}^3\mathbb{Z}/2\oplus\bigoplus\limits_{i=1}^m\Theta_{13},$ for odd $\textrm{ind}(M)$.
        \end{itemize}
        \end{enumerate}
        \item[(g)] For $k=10:$
        \begin{enumerate}
            \item When $m=1,~[\Sigma^{10}M, Top/O]=\Theta_{18}.$
            \item When $m\geq 2:$
               \begin{itemize}
        \item[(i)] If $\textrm{ind}(M)$ is even, then $[\Sigma^{10}M, Top/O]= H\oplus\bigoplus\limits_{i=1}^{m-1} \Theta_{14},$ where $H$ is an extension of the group $\Theta_{14}$ by the group $\Theta_{18}.$
        \item[(ii)] If $\textrm{ind}(M)$ is odd, then $[\Sigma^{10}M, Top/O]= \Theta_{18}\oplus\bigoplus\limits_{i=1}^{m-1} \Theta_{14}.$ 
        \end{itemize}
        \end{enumerate}
     
    \end{itemize}
\end{prop}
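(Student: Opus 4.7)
The plan is to split the computation into the ``wedge piece'' and the ``cone piece'' via the stable decompositions \eqref{stable_homotopy_type} and \eqref{stable_homotopy_type_rank1}. For $m \geq 2$, $\Sigma M \simeq \Sigma\,\mathrm{Cone}(\mathrm{ind}(M)\nu_4) \vee \bigvee_{i=1}^{m-1} \mathbb{S}^5$, so
\[
  [\Sigma^k M, Top/O] \;\cong\; [\Sigma^k \mathrm{Cone}(s\nu_4), Top/O] \;\oplus\; \bigoplus_{i=1}^{m-1}\Theta_{4+k},
\]
with $s=\mathrm{ind}(M)$; for $m=1$ the whole thing is the cone piece with $s=t$ from \eqref{stable_homotopy_type_rank1}. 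Thus everything reduces to computing $[\Sigma^k \mathrm{Cone}(s\nu_4), Top/O]$ via the six-term sequence \eqref{longM^8n}, where the outer maps are $(s\nu)^*$ on $\Theta_\ast$.

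The key inputs are already in hand: Lemma \ref{imagenu} gives the images of $\nu^*$ on $\Theta_\ast$ in the relevant range, Fact \ref{detection_at_3} determines when $s$ kills the $3$-primary part (via divisibility of $p_1(M)$), and for the rank-one case the fact that $Sq^4\colon H^4(M;\mathbb{Z}/2)\to H^8(M;\mathbb{Z}/2)$ is an isomorphism forces $s$ to be odd at the prime $2$. For parts (a)--(d), and for the ``generic'' branches of (e)--(g), these inputs make \eqref{longM^8n} collapse to a short exact sequence whose extension splits for the standard order/freeness reasons (one end cyclic of coprime order to the other, or a free summand in $\Theta_{\ast}$), exactly as in the $\mathbb{H}P^2$ computations of Proposition \ref{Concordancehp2sk}.

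The genuine obstacle is the extension problem in parts (e) and (g). For $k=6$, the sequence reads $0\to\Theta_{14}=\mathbb{Z}/2\to[\Sigma^6\mathrm{Cone}(s\nu_4),Top/O]\to \mathbb{Z}/2\subset\Theta_{10}\to 0$ (after splitting off the $3$-primary $\mathbb{Z}/3$ in the $3\mid p_1(M)$ case), and for $k=10$ one gets an analogous extension of $\Theta_{14}$ by $\Theta_{18}$. I would attack these by exactly the ladder-of-fibrations argument used to prove Proposition \ref{Concordancehp2sk}(iii) and (v): embed the cofibre sequence $\mathbb{S}^{7+k}\xrightarrow{s\nu_{4+k}}\mathbb{S}^{4+k}\to \Sigma^k\mathrm{Cone}(s\nu_4)\to\mathbb{S}^{8+k}$ into the commutative diagram with rows $[\,\cdot\,,\Omega(G/Top)]\to[\,\cdot\,,Top/O]\to[\,\cdot\,,G/O]\to[\,\cdot\,,G/Top]$ coming from $\Omega(G/Top)\xrightarrow{\omega}Top/O\xrightarrow{\psi}G/O\xrightarrow{\phi}G/Top$, and track the order of the image of $\omega_*$ using the rational/$2$-local splittings of $G/O$ and $G/Top$ in these dimensions. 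For $k=6$ this pins the abstract group down only up to $\mathbb{Z}/2\oplus\mathbb{Z}/2$ versus $\mathbb{Z}/4$, which is why the statement records both possibilities; for $k=10$ the method shows the extension is realized but does not canonically split, and I would name the resulting group $H$.

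The hard step, then, is resolving this remaining $2$-local extension ambiguity; unlike the $\mathbb{H}P^2$ case (where the $\omega_*$-image had a determinable order forcing a split), here the comparison diagrams only trap the group between two candidates. After this, assembling the wedge decomposition $\bigoplus_{i=1}^{m-1}\Theta_{4+k}$ with the computed cone piece yields all of (a)--(g); the rank-one statements in (f)(1) and (g)(1) then follow as the special case with no wedge summand, using $s=t$ with $t$ odd at $2$ to match the odd-$\mathrm{ind}(M)$ branch.
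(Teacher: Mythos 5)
Your overall plan — splitting off the wedge summand, reducing to the cone piece via the stable decompositions \eqref{stable_homotopy_type} and \eqref{stable_homotopy_type_rank1}, running the long exact sequence \eqref{longM^8n} with inputs from Lemma \ref{imagenu} and Fact \ref{detection_at_3}, and reading off the rank-one cases from the $Sq^4$ observation — is precisely the paper's approach. But your diagnosis of where the real work lies is misplaced, and this leaves a gap.

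You assert that the extensions in parts (a)--(d) split ``for the standard order/freeness reasons (one end cyclic of coprime order to the other, or a free summand in $\Theta_\ast$).'' For part (c) this is false: for $k=3$ the sequence is $0\to\Theta_{11}\to[\mathrm{Cone}(s\nu_7),Top/O]\to\Theta_7\to 0$ with $\Theta_{11}\cong\mathbb{Z}/992$ and $\Theta_7\cong\mathbb{Z}/28$, which share a factor of $4$; for $k=7$ the ends $\Theta_{15}$ and $\Theta_{11}$ share a factor of $32$; neither side is free. Coprimality does nothing here. The paper's own proof of Proposition \ref{Concordancehp2sk}(iii) and (v) — the $\mathbb{H}P^2$ versions of exactly these $k=3,7$ cases — requires the full ladder comparison along $\Omega(G/Top)\to Top/O\to G/O\to G/Top$ to establish the split, and you would need the same. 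You do invoke that ladder argument, but you aim it at parts (e) and (g), where the paper never resolves the extension at all (the $k=6$ case is left as $\mathbb{Z}/2\oplus\mathbb{Z}/2$ versus $\mathbb{Z}/4$, and the $k=10$ group $H$ is left undetermined), so the ladder buys you nothing there. Redirect it to (c); and note that you must also verify the ladder argument survives replacing the $\mathbb{H}P^2$ attaching map $\nu_4$ by $s\nu_4$ for general $s\in\mathbb{Z}/24$, since the image of $\phi_*$ on $[\Sigma^{k+1}\mathrm{Cone}(s\nu_4), G/O]$ and hence the order of the image of $\omega_*$ in that diagram depend on $s$.
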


\section{Some applications}
Let $\Tilde{\pi}_{0}(Diff(M))$ denote the group of isotopy classes of orientation preserving self-diffeomorphism of $M.$ We know that there is a natural map $\gamma: \Theta_{n+1}\to \Tilde{\pi}_{0}(Diff(M))$ defined as follows: given $\Sigma\in \Theta_{n+1},$ it corresponds to a diffeomorphism $f: \mathbb{D}^n\to \mathbb{D}^n$ with $f\vert_{\mathbb{S}^{n-1}}=id,$ then $\gamma(\Sigma)\in Diff(M)$ with $\gamma(\Sigma)\vert_{M-\mathbb{D}^n}=id$ and $\gamma(\Sigma)\vert_{\mathbb{D}^n}=f.$ We now recall a lemma describing the inertia group of the product manifold $M\times\mathbb{S}^1.$

\begin{lemma}{\cite[Proposition 1]{JL70}}\label{inertia}
Let $M$ be a compact smooth manifold of dimension $n\geq 5.$ Then the inertia group of \( M \times \mathbb{S}^1 \) is given by  \[I(M\times\mathbb{S}^1)=\mathrm{ker}\left(\Theta_{n+1}\xrightarrow{\gamma} \Tilde{\pi}_{0}(Diff(M))\right).\]\\
Note that for any $\Sigma_1,\Sigma_2\in \Theta_{n+1},$ if $\gamma(\Sigma_1)$ is isotopic to $\gamma(\Sigma_2),$ then $(M\times\mathbb{S}^1)\#\Sigma_1$ is oriented diffeomorphic to $(M\times\mathbb{S}^1)\#\Sigma_2.$
\end{lemma}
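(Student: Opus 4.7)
The strategy is to translate the statement about connected sums with homotopy spheres into a statement about mapping tori. Given $\Sigma\in\Theta_{n+1}$, one represents it by a diffeomorphism $f:\mathbb{D}^n\to\mathbb{D}^n$ fixing the boundary, so that by definition $\gamma(\Sigma):M\to M$ equals $f$ on a chosen embedded disk $\mathbb{D}^n\hookrightarrow M$ and the identity elsewhere. Writing $M_\phi=(M\times[0,1])/((x,0)\sim(\phi(x),1))$ for the mapping torus of $\phi$, the central geometric claim is that there is a canonical orientation-preserving diffeomorphism
\[
(M\times\mathbb{S}^1)\#\Sigma\;\cong\; M_{\gamma(\Sigma)}.
\]

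To establish this claim I would decompose $M\times\mathbb{S}^1=((M-\mathrm{int}\,\mathbb{D}^n)\times\mathbb{S}^1)\cup_{\mathbb{S}^{n-1}\times\mathbb{S}^1}(\mathbb{D}^n\times\mathbb{S}^1)$ and carry out the connected sum with $\Sigma$ inside the handle $\mathbb{D}^n\times\mathbb{S}^1$. The resulting piece, with boundary $\mathbb{S}^{n-1}\times\mathbb{S}^1$, can be identified with the $\mathbb{D}^n$-bundle over $\mathbb{S}^1$ of clutching function $f$, namely $\mathbb{D}^n\times[0,1]/((x,0)\sim(f(x),1))$; this is the standard twisted-torus description of $\mathbb{S}^n$ as $\mathbb{D}^n\cup_f\mathbb{D}^n$ crossed with the fibration direction. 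Regluing this piece with $(M-\mathrm{int}\,\mathbb{D}^n)\times\mathbb{S}^1$ along $\mathbb{S}^{n-1}\times\mathbb{S}^1$ visibly recovers $M_{\gamma(\Sigma)}$.

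Granted this identification, the inclusion $\ker(\gamma)\subseteq I(M\times\mathbb{S}^1)$ is immediate, and is in fact the content of the Note below the lemma: if $H_t:M\to M$ is an isotopy from $\mathrm{id}_M$ to $\gamma(\Sigma)$, then the formula $(x,t)\mapsto [H_t(x),t]$ defines an orientation-preserving diffeomorphism $M\times\mathbb{S}^1\xrightarrow{\cong} M_{\gamma(\Sigma)}$, which composed with the above diffeomorphism identifies $(M\times\mathbb{S}^1)\#\Sigma$ with $M\times\mathbb{S}^1$. For the reverse inclusion I would assume $(M\times\mathbb{S}^1)\#\Sigma\cong M\times\mathbb{S}^1$, hence $M_{\gamma(\Sigma)}\cong M_{\mathrm{id}}$. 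The main obstacle is that this diffeomorphism need not respect the canonical projection to $\mathbb{S}^1$; one must promote it to a fibration-preserving diffeomorphism and then read off an isotopy of $\gamma(\Sigma)$ to the identity from the monodromy. This step relies essentially on the dimensional hypothesis $n\geq 5$ via Cerf's pseudo-isotopy theorem together with the fact that $\gamma(\Sigma)$ is supported in a disk, and is the principal technical core of the result recalled from \cite{JL70}.
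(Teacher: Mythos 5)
The paper does not prove this lemma --- it is quoted from Levine \cite[Proposition 1]{JL70} and used as a black box, so there is no internal proof to compare against. Your identification $(M\times\mathbb{S}^1)\#\Sigma\cong M_{\gamma(\Sigma)}$ is the right geometric starting point, and the easy inclusion $\ker\gamma\subseteq I(M\times\mathbb{S}^1)$ (and the Note) do follow from it as you describe, via the diffeomorphism $(x,t)\mapsto[H_t(x),t]$ induced by an isotopy $H_t$.

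The reverse inclusion $I(M\times\mathbb{S}^1)\subseteq\ker\gamma$ carries essentially all the content of Levine's proposition, and your treatment of it has a genuine gap. Given an orientation-preserving diffeomorphism $h\colon M_{\gamma(\Sigma)}\to M\times\mathbb{S}^1$, you propose to ``promote it to a fibration-preserving diffeomorphism and then read off an isotopy of $\gamma(\Sigma)$ to the identity from the monodromy.'' That promotion is the entire difficulty, and you offer no mechanism for it: there is no a priori reason an abstract diffeomorphism between these two $M$-bundle total spaces over $\mathbb{S}^1$ respects, or can be isotoped to respect, the two fibrations. Invoking Cerf's pseudo-isotopy theorem does not close the gap either --- Cerf only upgrades a pseudo-isotopy of $\gamma(\Sigma)$ to $\mathrm{id}_M$ into an isotopy, so one must first extract such a pseudo-isotopy from $h$, which is itself a nontrivial step; and in any case Cerf's theorem requires $M$ to be simply connected, a hypothesis the lemma does not impose. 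As written, your sketch establishes one inclusion and leaves the substantive one unproved.
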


Recall from \cite{DAEBFP} that an exotic $m$-sphere $\Sigma^m$ does not bound spin manifolds if and only if $m \equiv 1,2\ (\textrm{mod}\ 8), $ and $m>8,$ which is equivalent to the condition that $\alpha(\Sigma^m)=0,$ where $\alpha: \Theta_m\to KO^{-m}$ is the $\alpha$-invariant. In particular, $\Theta_9= bspin_{10}\oplus \mathbb{Z}/2\{\Sigma_{\mu}\},$ where $bspin_{10}$ denotes the group of homotopy $9$-spheres bounding spin manifolds and $\Sigma_{\mu}$ is the exotic $9$-sphere corresponding to the generator $\mu\in \mathrm{coker}(J_9).$ 

\begin{lemma}\label{Inertiagrouphp2s1}
Let $M$ be a smooth projective plane-like manifold of dimension $8.$ Then $I(M\times\mathbb{S}^1)=bspin_{10}=bP_{10}\oplus\mathbb{Z}/2\{\Sigma_{\nu^3}\},$ where $\Sigma_{\nu^3}$ is the exotic $9$-sphere corresponding to $\nu^3\in \mathrm{Coker}(J_9).$
\end{lemma}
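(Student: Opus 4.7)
The plan is to establish the equality $I(M\times\mathbb{S}^1)=bspin_{10}$ by proving the two inclusions separately; the identification with $bP_{10}\oplus\mathbb{Z}/2\{\Sigma_{\nu^3}\}$ then follows from the splittings $\Theta_9=bP_{10}\oplus\mathrm{Coker}(J_9)$ and $\mathrm{Coker}(J_9)=\mathbb{Z}/2\{\nu^3\}\oplus\mathbb{Z}/2\{\mu\}$ together with the characterization $\Theta_9=bspin_{10}\oplus\mathbb{Z}/2\{\Sigma_\mu\}$ recalled just above the lemma.

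For the upper bound $I(M\times\mathbb{S}^1)\subseteq bspin_{10}$, I would first observe that $M$ is spin. Since $M$ is projective plane-like of dimension $8$, its cohomology is $\mathbb{Z}[x]/(x^3)$ with $|x|=4$, so $H^2(M;\mathbb{Z}/2)=0$ and therefore $w_2(M)=0$; hence $M\times\mathbb{S}^1$ is spin. The $\alpha$-invariant $\alpha:\Omega^{\mathrm{spin}}_9\to KO^{-9}\cong\mathbb{Z}/2$ is a diffeomorphism invariant that is additive under connected sum and, by the definition of $bspin_{10}$, detects precisely $\Theta_9\setminus bspin_{10}$. For any $\Sigma\in\Theta_9$ with $\alpha(\Sigma)\neq 0$, I then get $\alpha((M\times\mathbb{S}^1)\#\Sigma)=\alpha(M\times\mathbb{S}^1)+\alpha(\Sigma)\neq\alpha(M\times\mathbb{S}^1)$, which rules out a diffeomorphism and yields the claimed containment.

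For the reverse inclusion, I would invoke Lemma \ref{inertia} to identify $I(M\times\mathbb{S}^1)$ with $\ker(\gamma:\Theta_9\to\tilde{\pi}_0(\mathrm{Diff}(M)))$, and then show that the generators $\Sigma_K\in bP_{10}$ (the Kervaire sphere) and $\Sigma_{\nu^3}$ both lie in this kernel. For $\Sigma_K$, I would exploit the fact that $\Sigma_K$ bounds a parallelizable $10$-manifold $W$; combined with $\mathrm{Wh}(\pi_1(M\times\mathbb{S}^1))=\mathrm{Wh}(\mathbb{Z})=0$, an $s$-cobordism argument (using $W$ to build a relative cobordism from $M\times\mathbb{S}^1$ to $(M\times\mathbb{S}^1)\#\Sigma_K$) should upgrade to a diffeomorphism. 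For $\Sigma_{\nu^3}$, I would exploit the cell decomposition $M\simeq\mathbb{S}^4\cup_{t\nu_4}\mathbb{D}^8$: the Hopf-type attaching map $\nu_4$ and the explicit relationship of $\nu^3\in\pi_9^s$ with the $4$-cell of $M$ should allow a geometric realization of the required diffeomorphism via a twist supported in a neighborhood of the $\mathbb{S}^4$-cell crossed with $\mathbb{S}^1$.

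The main obstacle is the lower-bound step. Since Theorem B gives $I_c(M\times\mathbb{S}^1)=0$, the diffeomorphisms I need to produce are necessarily not homotopic to the canonical homeomorphism, so they cannot be obtained by purely concordance-theoretic means and must be constructed by geometric arguments specific to projective plane-like manifolds. The $s$-cobordism argument for $\Sigma_K$ is comparatively standard, but the realization for $\Sigma_{\nu^3}$ is the subtle point and will likely require a careful mapping-torus construction leveraging the Hopf-type attaching map, coordinated with a loop in $\pi_1(\mathrm{Diff}(M))$ to yield the correct class in $\tilde{\pi}_0(\mathrm{Diff}(M\times\mathbb{S}^1))$.
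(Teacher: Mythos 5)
Your proposal diverges from the paper's argument, and while the upper bound is a nice, correct, elementary observation, the lower bound has a genuine gap.

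The paper does not split the argument into two inclusions at all. It quotes the Su--Wang computation of the mapping class group (\cite[Theorem 1.1]{SuWang2024}, and their page 13): $\gamma:\Theta_9\to\tilde{\pi}_0(\mathrm{Diff}(M))$ is surjective and $\tilde{\pi}_0(\mathrm{Diff}(M))\cong\mathbb{Z}/2$, generated by $\gamma(\Sigma_\mu)$. Combined with Lemma~\ref{inertia}, this immediately gives $I(M\times\mathbb{S}^1)=\ker\gamma=bP_{10}\oplus\mathbb{Z}/2\{\Sigma_{\nu^3}\}$ with no further work. Your $\alpha$-invariant argument for the inclusion $I(M\times\mathbb{S}^1)\subseteq bspin_{10}$ is sound and is in fact a pleasant, self-contained proof of half of the lemma that the paper does not bother to spell out because the Su--Wang citation yields both directions at once.

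The problem is the reverse inclusion, and you have already flagged it as the ``main obstacle.'' Your $s$-cobordism sketch for $\Sigma_K\in bP_{10}$ does not go through as stated: a parallelizable coboundary $W$ of $\Sigma_K$ does not hand you an $h$-cobordism (let alone an $s$-cobordism) from $M\times\mathbb{S}^1$ to $(M\times\mathbb{S}^1)\#\Sigma_K$ --- the relative cobordism you describe has nontrivial relative homology coming from the middle-dimensional handles of $W$, so the $s$-cobordism theorem is not applicable without killing those via surgery, and the obstruction to doing so in the presence of $\pi_1=\mathbb{Z}$ is exactly the sort of thing that has to be checked, not assumed. (Note that $bP_{10}\not\subseteq I(\mathbb{S}^8\times\mathbb{S}^1)$ in general, so there is no formal reason for $bP_{10}$ to lie in $\ker\gamma$; it depends on $M$.) For $\Sigma_{\nu^3}$ you offer only a heuristic, not an argument. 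What you would actually need to supply to make your route work is precisely the content of Su--Wang: that $\tilde{\pi}_0(\mathrm{Diff}(M))\cong\mathbb{Z}/2$ is detected by $\alpha$. Once you have that, your upper bound plus surjectivity of $\gamma$ already finishes the proof, and the separate constructions for $\Sigma_K$ and $\Sigma_{\nu^3}$ become unnecessary.
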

\begin{proof}
  It is known that $\Theta_9\cong bP_{10}\oplus\mathbb{Z}/2\{\Sigma_{\mu}\}\oplus\mathbb{Z}/2\{\Sigma_{\nu^3}\},$ where $\Sigma_{\mu},\Sigma_{\nu^3}$ are exotic $9$-spheres associated to the generators $\mu, \nu^3 \in \mathrm{Coker}(J_9),$ respectively. According to \cite[Page 13]{SuWang2024}, the map $\gamma: \Theta_9\to \Tilde{\pi}_0(Diff(M))$ is surjective, and $\Tilde{\pi}_0(Diff(M))\cong \mathbb{Z}/2,$ generated by $\gamma(\Sigma_{\mu})$ \cite[Theorem 1.1]{SuWang2024}. Thus, we have $\mathrm{ker}\left(\Theta_9\xrightarrow{\gamma}\Tilde{\pi}_0(Diff(M))\right)\cong bP_{10}\oplus\mathbb{Z}/2\{\Sigma_{\nu^3}\},$ which, by Lemma \ref{inertia}, is the inertia group of $M\times\mathbb{S}^1$.
\end{proof}

\begin{thm}\label{classificationMsk}
    Let $M$ be a smooth projective plane-like $8$-dimensional manifold. Then any closed, oriented, smooth manifold $N$ homeomorphic to $M\times\mathbb{S}^1$ is oriented diffeomorphic to exactly one of the manifolds $M\times\mathbb{S}^1$ or $(M\times\mathbb{S}^1)\#\Sigma_{\mu}$, where $\Sigma_{\mu}$ is the exotic $9$-sphere associated with $\mu\in \mathrm{Coker}(J_9).$
\end{thm}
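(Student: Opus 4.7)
The plan is to combine three ingredients already available in the paper: the concordance classification $\mathcal{C}(M\times \mathbb{S}^1)\cong \Theta_8\oplus\Theta_9$ from Theorem D(i), the inertia group computation in Lemma \ref{Inertiagrouphp2s1}, and the Kramer--Stolz rigidity theorem, which states that a smooth projective plane-like $8$-manifold is determined up to oriented diffeomorphism by its single Pontryagin number $p_1^2(M)[M]$.

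First, I would decode the summands in the isomorphism $\mathcal{C}(M\times \mathbb{S}^1)\cong \Theta_8\oplus\Theta_9$ provided by Corollary \ref{concorMtimesSk}. Since the splitting arises from the cofibration $M\vee \mathbb{S}^1\hookrightarrow M\times\mathbb{S}^1\to \Sigma M$ and the naturality of the Kirby--Siebenmann identification of Theorem \ref{kirbyresult}, the summand $[M,Top/O]\cong\Theta_8$ is realized by $\sigma\mapsto[(M\#\sigma)\times\mathbb{S}^1]$, while the summand $[\Sigma M,Top/O]\cong\Theta_9$ is realized by $\tau\mapsto[(M\times\mathbb{S}^1)\#\tau]$. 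Consequently, every smooth manifold $N$ homeomorphic to $M\times\mathbb{S}^1$ is concordant, hence in particular oriented diffeomorphic, to one of the form $(M\#\sigma)\times\mathbb{S}^1\#\tau$ for some $(\sigma,\tau)\in \Theta_8\oplus \Theta_9$.

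Next, I would collapse the $\Theta_8$ factor. Every $\sigma\in\Theta_8$ is stably parallelizable, so $p_1(M\#\sigma)=p_1(M)$ and $M\#\sigma$ is a projective plane-like $8$-manifold with the same Pontryagin number as $M$. The Kramer--Stolz theorem \cite{LKSS07} then yields an orientation-preserving diffeomorphism $\phi\colon M\#\sigma\to M$; crossing with $\mathrm{id}_{\mathbb{S}^1}$ and taking connected sum with $\tau$ gives $(M\#\sigma)\times\mathbb{S}^1\#\tau\cong M\times\mathbb{S}^1\#\tau$. So it suffices to classify the family $\{M\times\mathbb{S}^1\#\tau\mid \tau\in\Theta_9\}$ up to oriented diffeomorphism. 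By definition of inertia, two members of this family are oriented diffeomorphic iff $\tau_1-\tau_2\in I(M\times\mathbb{S}^1)=bP_{10}\oplus\mathbb{Z}/2\{\Sigma_{\nu^3}\}$, the latter equality being Lemma \ref{Inertiagrouphp2s1}. Since $\Theta_9=bP_{10}\oplus\mathbb{Z}/2\{\Sigma_\mu\}\oplus\mathbb{Z}/2\{\Sigma_{\nu^3}\}$, the quotient $\Theta_9/I(M\times\mathbb{S}^1)\cong \mathbb{Z}/2\{\Sigma_\mu\}$ produces exactly two diffeomorphism classes, namely $M\times\mathbb{S}^1$ and $(M\times\mathbb{S}^1)\#\Sigma_\mu$, and these are distinct because $\Sigma_\mu\notin I(M\times\mathbb{S}^1)$.

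The main anticipated obstacle is the geometric identification in the first step: one must check that the abstract algebraic splitting of $\mathcal{C}(M\times\mathbb{S}^1)$ coming from Corollary \ref{concorMtimesSk} is realized by the two natural geometric constructions (changing the smoothing on the $M$ factor and connect-summing with a homotopy $9$-sphere). This should follow from the functoriality of the Kirby--Siebenmann bijection under the projection $M\times\mathbb{S}^1\to M$ and the collapse $M\times\mathbb{S}^1\to \Sigma M$, but requires care to ensure that the images of $\Theta_8$ and $\Theta_9$ under the corresponding pullbacks really are the connected-sum classes claimed.
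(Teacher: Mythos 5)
Your proof is correct and takes a genuinely different route from the paper. The paper argues via surgery theory: it splits on whether the normal invariant $\eta^{\mathrm{Diff}}(g)$ of the reference homeomorphism is trivial, uses the surgery exact sequence together with $bP_{10}\subset I(M\times\mathbb{S}^1)$ to dispose of the trivial case, then (for a class in the $\Theta_8$ summand) quotes $I(M)=\mathbb{Z}/2$ to produce a self-homotopy equivalence $f$ of $M$ whose normal invariant equals $[\bar\nu]$, diagonalizes using Pave\v{s}i\'c's results to get a self-equivalence $h=f\times\mathrm{id}$ of $M\times\mathbb{S}^1$, and cancels $\eta^{\mathrm{Diff}}(g)$ by precomposing with $h$. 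Your argument instead decodes the splitting $\mathcal{C}(M\times\mathbb{S}^1)\cong\Theta_8\oplus\Theta_9$ geometrically (via naturality of the Kirby--Siebenmann bijection under the projection and the collapse map, which is the step you rightly flag as the one needing justification) and kills the $\Theta_8$ factor outright by invoking Kramer--Stolz rigidity to produce an honest diffeomorphism $M\#\sigma\cong M$; the rest is the standard observation that $M\times\mathbb{S}^1\#\tau_1\cong M\times\mathbb{S}^1\#\tau_2$ iff $\tau_1-\tau_2\in I(M\times\mathbb{S}^1)$, combined with Lemma~\ref{Inertiagrouphp2s1}. Your route buys a more self-contained and geometrically transparent proof that avoids normal-invariant bookkeeping (and, incidentally, makes the distinctness of the two classes fully explicit, which the paper leaves tacit), at the cost of invoking the stronger Kramer--Stolz diffeomorphism classification where the paper only needs $I(M)=\Theta_8$; the paper's route stays entirely inside surgery formalism, which sidesteps the ``geometric decoding'' of the splitting that you have to verify. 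Both hinge on the same Lemma~\ref{Inertiagrouphp2s1} to produce the $\mathbb{Z}/2\{\Sigma_\mu\}$ quotient.
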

\begin{proof}
       Let $[(N,g)]$ be an element of $\mathcal{C}(M\times\mathbb{S}^1).$ If the normal invariant $\eta^{Diff}(g)$ is trivial, then, from the smooth surgery exact sequence, $N$ is (oriented) diffeomorphic to $(M\times\mathbb{S}^1)\#\Sigma$ for some $\Sigma\in bP_{10}.$ Since $bP_{10}\cap I(M\times\mathbb{S}^1)\neq \emptyset,$ the manifold $N$ is (oriented) diffeomorphic to $M\times\mathbb{S}^1.$ 

  Suppose $\eta^{Diff}(g)$ is non-trivial. From Corollary \ref{concorMtimesSk} and Proposition \ref{msk_rank2} {(a)}, we have $\mathcal{C}(M\times\mathbb{S}^1)\cong \Theta_9\oplus\Theta_8.$ If $[(N,g)]\in \Theta_9 \subset \mathcal{C}(M\times\mathbb{S}^1),$ then, by Lemma \ref{Inertiagrouphp2s1}, $N$ is diffeomorphic to either $M\times\mathbb{S}^1$ or $(M\times\mathbb{S}^1)\#\Sigma_{\mu}.$ On the other hand, if $[(N,g)]\in \Theta_8\subset \mathcal{C}(M\times\mathbb{S}^1),$ then $\eta^{Diff}(g)= [\bar{\nu}]\in\pi_8(G/O),$ where $\bar{\nu} $ is the generators of $\mathrm{Coker}(J_8)$ \cite[Theorem 1.1.14]{ravenel}. Since $[M, Top/O]=\Theta_8$ and $I(M)=\mathbb{Z}/2$ \cite{RK16, DCCN20}, there exists a self-homotopy equivalence $f:M\to M$ such that its normal invariant $\eta^{Diff}(f)=[\bar{\nu}]\in \pi_8(G/O).$ Since $M$ is simply connected, by \cite[Proposition 2.1]{PP99}, any self-homotopy equivalence of $M\times\mathbb{S}^1$ is diagonalizable. Hence, \cite[Proposition 2.3 and Theorem 2.5]{PP99} implies that $h=f\times id : M\times\mathbb{S}^1\to M\times\mathbb{S}^1$ is a self-homotopy equivalence of $M\times\mathbb{S}^1,$ and $\eta^{Diff}(h)=\eta^{Diff}(f).$ Now, the composition formula for normal invariant gives
     \begin{align*}
        \eta^{Diff}(h\circ g)= & \eta^{Diff}(h)+ (h^{-1})^* \eta^{Diff}(g)\\
         = & [\bar{\nu}]\pm [\bar{\nu}]=0.
    \end{align*}
    Therefore, by the previous case, $N$ is (oriented) diffeomorphic to $M\times\mathbb{S}^1.$ This completes the proof.  
\end{proof}

\begin{lemma}\label{Inertiagroupop2s1}
         Let $M$ be a smooth projective plane-like manifold of dimension $16.$ Then $I(M\times\mathbb{S}^1)=bP_{18}\oplus\mathbb{Z}/2\{\Sigma_{\eta\eta^*}\},$ where $\Sigma_{\eta\eta^*}$ is the exotic $17$-sphere associated with the generator $\eta\eta^*\in \mathrm{Coker}(J_{17}).$
\end{lemma}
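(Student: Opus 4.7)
The plan is to mimic the proof of Lemma \ref{Inertiagrouphp2s1}, upgraded to dimension $16$. By Lemma \ref{inertia}, $I(M\times\mathbb{S}^1)$ is the kernel of the map $\gamma:\Theta_{17}\to \Tilde{\pi}_{0}(Diff(M))$, so the task reduces to deciding, for each generator of $\Theta_{17}$, whether its image under $\gamma$ is trivial. The split short exact sequence recalled in the introduction gives $\Theta_{17}=bP_{18}\oplus\mathrm{Coker}(J_{17})$, and combined with $\mathrm{Coker}(J_{17})=\mathbb{Z}/2\{\eta\eta^*\}\oplus\mathbb{Z}/2\{\nu\kappa\}\oplus\mathbb{Z}/2\{\bar{\mu}\}$, this yields
$$\Theta_{17}=bP_{18}\oplus\mathbb{Z}/2\{\Sigma_{\eta\eta^*}\}\oplus\mathbb{Z}/2\{\Sigma_{\nu\kappa}\}\oplus\mathbb{Z}/2\{\Sigma_{\bar{\mu}}\},$$
so I must show $\ker(\gamma)$ equals the first two summands.

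For the non-triviality direction, the generator $\Sigma_{\bar{\mu}}$ is the homotopy sphere associated with the Adams $\mu$-class in $\pi^s_{17}$, hence has non-zero $\alpha$-invariant. Since $\mathbb{O}P^2$ admits a metric of positive scalar curvature, $\alpha(M\times\mathbb{S}^1)=\alpha(M)\cdot\alpha(\mathbb{S}^1)=0$, and additivity of $\alpha$ on connected sums then forces $(M\times\mathbb{S}^1)\#\Sigma_{\bar{\mu}}$ to be not even spin cobordant to $M\times\mathbb{S}^1$; in particular $\gamma(\Sigma_{\bar{\mu}})\neq 0$. The analogous statement for $\Sigma_{\nu\kappa}$ requires a finer input, since the $\alpha$-invariant does not detect $\nu\kappa$; here I would invoke the dimension-$16$ analog of the Su--Wang computation used in Lemma \ref{Inertiagrouphp2s1}, which identifies $\Tilde{\pi}_{0}(Diff(\mathbb{O}P^2))\cong(\mathbb{Z}/2)^2$ with generators $\gamma(\Sigma_{\bar{\mu}})$ and $\gamma(\Sigma_{\nu\kappa})$.

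With the structure of $\Tilde{\pi}_{0}(Diff(M))$ in hand, the inclusion $bP_{18}\oplus\mathbb{Z}/2\{\Sigma_{\eta\eta^*}\}\subseteq\ker(\gamma)$ follows by a counting argument: once $\gamma$ is known to be surjective onto $(\mathbb{Z}/2)^2$, the index $|\Theta_{17}|/|\ker(\gamma)|$ equals $4$, and since the classes of $\Sigma_{\bar{\mu}}$ and $\Sigma_{\nu\kappa}$ already exhaust the image, the remaining generators $bP_{18}$ and $\Sigma_{\eta\eta^*}$ must lie in the kernel. Combined with the non-triviality of $\gamma(\Sigma_{\bar{\mu}})$ and $\gamma(\Sigma_{\nu\kappa})$, this pins down $\ker(\gamma)$ precisely.

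The main obstacle is establishing the correct value of $\Tilde{\pi}_{0}(Diff(M))$ and separating $\gamma(\Sigma_{\bar{\mu}})$ from $\gamma(\Sigma_{\nu\kappa})$; this is the crucial non-trivial input and is the $\mathbb{O}P^2$-analog of the Su--Wang computation for $\mathbb{H}P^2$. Once this is in place, the rest of the bookkeeping parallels the dimension-$8$ argument and presents no new difficulty.
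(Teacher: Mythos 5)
Your proposal takes the same route as the paper: the decisive input in both is the Su--Wang computation, which in fact already covers the $16$-dimensional case (pages 13--14 of \cite{SuWang2024}), identifying $\Theta_{17}/\mathrm{ker}(\gamma)\cong\mathbb{Z}/2\{\Sigma_{\bar{\mu}}\}\oplus\mathbb{Z}/2\{\Sigma_{\nu\kappa}\}$, after which Lemma \ref{inertia} together with the splitting $\Theta_{17}\cong bP_{18}\oplus\mathrm{Coker}(J_{17})$ gives $\mathrm{ker}(\gamma)= bP_{18}\oplus\mathbb{Z}/2\{\Sigma_{\eta\eta^*}\}$. Your supplementary $\alpha$-invariant argument for $\Sigma_{\bar{\mu}}$ is correct in substance but redundant once the Su--Wang result is invoked, and note that the paper only uses the quotient $\Theta_{17}/\mathrm{ker}(\gamma)$ rather than the stronger assertion $\Tilde{\pi}_{0}(Diff(M))\cong(\mathbb{Z}/2)^2$, since surjectivity of $\gamma$ is not claimed in dimension $16$ (unlike the $\mathbb{H}P^2$ case).
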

\begin{proof}
We note from \cite[Page 13 and 14]{SuWang2024} that $\Theta_{17}/{\text{ker}(\gamma)}\cong \mathbb{Z}/2\{\Sigma_{\bar{\mu}}\}\oplus\mathbb{Z}/2\{\Sigma_{\nu\kappa}\},$ with $\Sigma_{\bar{\mu}}$ and $\Sigma_{\nu\kappa}$ being exotic $17$-spheres corresponding to the elements $\bar{\mu}, \nu\kappa \in \mathrm{Coker}(J_{17}).$ Since $\Theta_{17}$ fits into the split short exact sequence $$0\to bP_{18}\to \Theta_{17}\to \mathrm{Coker}(J_{17})\to 0,$$ it follows that $\text{ker}(\gamma)\cong bP_{18}\oplus \mathbb{Z}/2\{\Sigma_{\eta\eta^*}\},$ where $\Sigma_{\eta\eta^*}$ is the exotic $17$-sphere associated with $\eta\eta^*\in \mathrm{Coker}(J_{17}).$ By Lemma \ref{inertia}, this implies that $I(M\times\mathbb{S}^1)=bP_{18}\oplus \mathbb{Z}/2\{\eta\eta^*\}.$  
\end{proof}

\begin{thm}\label{bp18}
    $bP_{18}\cap I_h(M\times\mathbb{S}^9)= \emptyset$ for any smooth projective plane-like manifold $M$ of dimension $8.$  
\end{thm}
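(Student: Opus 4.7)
The plan is to show that if $\Sigma_K\in bP_{18}$ is the Kervaire sphere, then $\Sigma_K\notin I_h(M\times\mathbb{S}^9)$. Write $N=M\times\mathbb{S}^9$, which is simply connected of dimension $17$. In the smooth surgery exact sequence
\[
[\Sigma N,G/O]\xrightarrow{\sigma}L_{18}(\mathbb{Z})\xrightarrow{\omega}\mathcal{S}^{Diff}(N)\xrightarrow{\eta^{Diff}}[N,G/O]\to 0,
\]
the Wall realization $\omega$ sends the Arf--Kervaire generator of $L_{18}(\mathbb{Z})=\mathbb{Z}/2$ to $[N\#\Sigma_K,h_{\Sigma_K}]$. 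Hence $\Sigma_K\in I_h(N)$ if and only if $\omega$ is zero, equivalently (by exactness) iff $\sigma$ is surjective. I will derive a contradiction by showing $\sigma\equiv 0$.

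Since $G/O$ is an $H$-space, the split short exact sequence (\ref{note2.3}) together with the stable splitting $N\simeq_{s}M\vee\mathbb{S}^9\vee\Sigma^9 M$ yields
\[
[\Sigma N,G/O]\cong [\Sigma M,G/O]\oplus \pi_{10}(G/O)\oplus [\Sigma^{10}M,G/O].
\]
A class in the first summand geometrically realizes as a $9$-dimensional surgery problem crossed with $\mathbb{S}^9$; since $L_9(\mathbb{Z})=0$ and $L$-theory is multiplicative, these contribute zero. For the second, the map $\pi_{10}(G/O)\to L_{10}(\mathbb{Z})=\pi_{10}(G/Top)$ vanishes because, in the fibration $Top/O\to G/O\to G/Top$, the connecting map $L_{10}(\mathbb{Z})\to\pi_9(Top/O)=\Theta_9$ is injective (sending the generator to the Kervaire $9$-sphere in $bP_{10}$); multiplicativity then kills this piece too. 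Hence $\sigma$ is determined by its restriction to $[\Sigma^{10}M,G/O]$. As in the proof of Theorem \ref{ConcorMtimesSk}, for projective plane-like $M$ the operation $Sq^4$ is an isomorphism on $H^4(M;\mathbb{Z}/2)$, forcing the attaching map of the top cell of $M$ to be $t\nu_4$ with odd $t\in\mathbb{Z}/24$. The cofibration $\mathbb{S}^{17}\xrightarrow{t\nu_{14}}\mathbb{S}^{14}\hookrightarrow\Sigma^{10}M\to\mathbb{S}^{18}$ induces
\[
\pi_{18}(G/O)\to[\Sigma^{10}M,G/O]\to\ker\!\left((t\nu_{14})^{*}\colon\pi_{14}(G/O)\to\pi_{17}(G/O)\right)\to 0,
\]
and the $\pi_{18}(G/O)$-contribution dies because, from the same fibration, the connecting map $L_{18}(\mathbb{Z})=\pi_{18}(G/Top)\hookrightarrow\pi_{17}(Top/O)=\Theta_{17}$ is injective (sending the Arf--Kervaire generator to $\Sigma_K\in bP_{18}$), forcing $\pi_{18}(G/O)\to L_{18}(\mathbb{Z})$ to vanish.

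The main remaining step, which I expect to be the principal technical obstacle, is to show the contribution of $\ker(t\nu_{14})^{*}$ to $\sigma$ also vanishes. Here $\pi_{14}(G/O)=\mathrm{Coker}(J_{14})=\mathbb{Z}/2\{\kappa\}\oplus\mathbb{Z}/2\{\sigma^2\}$, and the relations $\nu\sigma=0$ \cite{toda} together with $\nu\kappa\neq 0$ in $\mathrm{Coker}(J_{17})$ \cite{ravenel} give $\ker(t\nu_{14})^{*}\cong\mathbb{Z}/2$ for odd $t$. I plan to analyse the surgery obstruction of the extension of this generator to $\Sigma^{10}M$ via a secondary-operation/Toda-bracket computation; equivalently, to argue that the pullback of the universal Kervaire class $\bar k_{18}\in H^{18}(G/O;\mathbb{Z}/2)$ is decomposable (a consequence of the vanishing of $\pi_{18}(G/O)\to\pi_{18}(G/Top)=\mathbb{Z}/2$ combined with the Hopf-algebra structure on the mod-$2$ cohomology of these infinite loop spaces), hence pulls back to zero on the two-cell complex $\Sigma^{10}M$, whose cells lie in dimensions $14$ and $18$, so all positive-degree cup products vanish for dimensional reasons ($14+14>18$). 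Once this is established, $\sigma\equiv 0$, $\omega$ is injective, $[N\#\Sigma_K,h_{\Sigma_K}]\neq[N,\mathrm{id}]$ in $\mathcal{S}^{Diff}(M\times\mathbb{S}^9)$, and $\Sigma_K\notin I_h(M\times\mathbb{S}^9)$ as required.
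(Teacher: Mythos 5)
Your overall strategy — reducing $bP_{18}\cap I_h(M\times\mathbb{S}^9)=\emptyset$ to the vanishing of the surgery obstruction map $\sigma\colon[\Sigma(M\times\mathbb{S}^9),G/O]\to L_{18}(e)=\mathbb{Z}/2$ — is the same as the paper's, but the route you take differs and, crucially, is incomplete. You explicitly defer the contribution of $\ker\bigl((t\nu_{14})^{*}\colon\pi_{14}(G/O)\to\pi_{17}(G/O)\bigr)\cong\mathbb{Z}/2$ to a proposed secondary-operation argument, and that proposed argument has a genuine gap. The claim you lean on — that $\phi^*(\mathcal{K}_{18})\in H^{18}(G/O;\mathbb{Z}/2)$ is decomposable because $\pi_{18}(G/O)\to\pi_{18}(G/\mathrm{Top})$ vanishes — does not follow from the Hopf-algebra structure alone. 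Since $\phi$ is an infinite loop map, $\phi^*(\mathcal{K}_{18})$ is primitive, and in a connected graded Hopf algebra over $\mathbb{Z}/2$ a primitive element can be indecomposable while still annihilating the spherical (mod-$2$ Hurewicz) classes in degree $18$; the vanishing of the $\pi_{18}$-pairing tells you nothing about the rest of the degree-$18$ primitives of $H_*(G/O;\mathbb{Z}/2)$. Without that decomposability, the dimensional argument on the two-cell complex $\Sigma^{10}M$ ($14+14>18$) does not apply, and the last $\mathbb{Z}/2$ piece of $\sigma$ is not controlled. In addition, the "$L$-theory is multiplicative" step for the $[\Sigma M,G/O]$ summand is stated informally and would need to be made precise via the product formula for surgery obstructions (using $\sigma^*(\mathbb{S}^9)=0\in L^9(e)$) or via the characteristic-class formula directly.

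By contrast, the paper sidesteps the $\mathcal{K}_{18}$ decomposability question entirely: it uses the Shaneson isomorphism $L_{18}(e)\cong L_{18}(\mathbb{Z}[\mathbb{Z}])$ to transfer the obstruction to the closed $18$-manifold $M\times\mathbb{S}^9\times\mathbb{S}^1$, where the injectivity of $p^*$ from \eqref{note2.3} lets one apply the Sullivan--Wall/Brumfiel characteristic-class formula for the Kervaire obstruction on a closed even-dimensional manifold, and the relevant Wu-class terms vanish there. That computation kills all of $\sigma$ at once rather than piece-by-piece through the stable splitting. If you want to pursue your decomposition approach, you would need to either directly compute $\gamma^*\phi^*(\mathcal{K}_{18})\in H^{18}(\Sigma^{10}M;\mathbb{Z}/2)$ for a generator $\gamma$ of the remaining $\mathbb{Z}/2$ (for instance via the cell structure $\mathbb{S}^{14}\cup_{t\nu_{14}}\mathbb{D}^{18}$ and a secondary cohomology operation, as you suggest), or replace the decomposability claim with the characteristic-class formula argument — at which point you have essentially reproduced the paper's proof.
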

\begin{proof}
    From the smooth surgery exact sequence of $M\times\mathbb{S}^9,$ it is sufficient to show that the surgery obstruction map $\Theta^{Diff} : [\Sigma(M\times\mathbb{S}^9), G/O]\to L_{18}(e)$ is trivial. The surgery obstruction map  $\Theta^{Diff} : [\Sigma(M\times\mathbb{S}^9), G/O]\to L_{18}(e)$ fits into the following commutative diagram:
   
\begin{center}
    \begin{tikzcd}
        {[\Sigma(M\times\mathbb{S}^9), G/O]}\arrow[r,"\Theta^{Diff}"]\arrow[d,"p^*"']&{L_{18}(e)}\arrow[r,"\omega^{Diff}"]\arrow[d,"\cong"]&{\mathcal{S}^{Diff}(M\times\mathbb{S}^9)}\\
        {[M\times\mathbb{S}^9\times\mathbb{S}^1, G/O]}\arrow[r,"\bar{\Theta}^{Diff}"']&{L_{18}(\mathbb{Z})}
    \end{tikzcd}
\end{center}
 where $p^*:[\Sigma (M\times\mathbb{S}^9), G/O]\to [M\times\mathbb{S}^9\times\mathbb{S}^1, G/O]$ is injective by \eqref{note2.3}, and $L_{18}(e)\to L_{18}(\mathbb{Z})$ is an isomorphism from \cite{JS69}. Let $f: \Sigma(M\times\mathbb{S}^9)\to G/O,$ then from the above commutative diagram, the surgery obstruction of $f$ is given by \cite{gw} 
\begin{equation}
       \Theta^{Diff}(f)=\bar{\Theta}^{Diff}(p^*(f))=\langle v_{4}^2(M\times\mathbb{S}^9\times\mathbb{S}^1) (p^*(f))^*(\mathcal{K}_2), [\mathbb{H}P^2\times\mathbb{S}^9\times\mathbb{S}^1]\rangle=0,\nonumber\\
\end{equation}
where $\mathcal{K}_2\in H^2(F/O)=\mathbb{Z}/2$ is the generator, the $4$-th Wu class $v_4(M\times\mathbb{S}^9\times\mathbb{S}^1)$ of $M\times\mathbb{S}^9\times\mathbb{S}^1$ is zero from \cite[Lemma 3.5]{LK2003}and \cite{MS}. This completes the proof.
\end{proof}

Combining \cite[Lemma 9.1]{Kawakubo69}, Theorem \ref{ConcorMtimesSk} {(iii)}, and Theorem \ref{bp18}, we obtain the following result.
\begin{prop}
 For any $8$-dimensional smooth projective plane-like manifold $M$, the homotopy inertia group $I_h(M\times\mathbb{S}^9)$ is either $\mathbb{Z}/2\{\Sigma_{\nu\kappa}\}$ or $\mathbb{Z}/2\{\Sigma_{\nu\kappa}\}\oplus\mathbb{Z}/2\{\Sigma_{\eta\eta^*}\},$ where $\Sigma_{\nu\kappa}, \Sigma_{\eta\eta^*}$ are the exotic $17$-spheres associated with the respective generators $\nu\kappa$ and $\eta\eta^*$ of $\mathrm{Coker}(J_{17})$.
\end{prop}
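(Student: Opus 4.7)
The plan is to sandwich $I_h(M\times \mathbb{S}^9)$ between an explicit lower bound coming from $I_c$ and an upper bound inside $\Theta_{17}$ produced by Theorem \ref{bp18} together with Kawakubo's Lemma 9.1. Recall the splitting
\[
\Theta_{17} \;\cong\; bP_{18} \,\oplus\, \mathbb{Z}/2\{\Sigma_{\eta\eta^*}\} \,\oplus\, \mathbb{Z}/2\{\Sigma_{\nu\kappa}\} \,\oplus\, \mathbb{Z}/2\{\Sigma_{\bar\mu}\},
\]
using $\mathrm{Coker}(J_{17}) \cong (\mathbb{Z}/2)^3$. Since $I_c \subseteq I_h \subseteq I \subseteq \Theta_{17}$, this splitting is the natural place to localize the inclusions.

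For the lower bound, Theorem \ref{ConcorMtimesSk}(iii) with $m=1$, $k=9$ gives $I_c(M\times \mathbb{S}^9) \cong \mathbb{Z}/2$. To identify which exotic sphere generates this $\mathbb{Z}/2$, I would trace through the proof of Lemma \ref{imagenu} at dimension $l=14$: the nonzero image of $\nu_{14}^*\colon \pi_{14}(Top/O)\to\pi_{17}(Top/O)$ is detected by the Toda bracket computation $\nu\circ\kappa = \nu\kappa \in \mathrm{Coker}(J_{17})$. Combining this with \eqref{inertia_description} (where $s$ is odd for $M$ projective plane-like, by the $Sq^4$ argument in the proof of Theorem \ref{ConcorMtimesSk}) pins down the generator as $\Sigma_{\nu\kappa}$. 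Hence $\mathbb{Z}/2\{\Sigma_{\nu\kappa}\} \subseteq I_h(M\times \mathbb{S}^9)$.

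For the upper bound, first apply Theorem \ref{bp18}: $bP_{18}\cap I_h(M\times \mathbb{S}^9) = \emptyset$, so $I_h$ projects injectively to $\mathrm{Coker}(J_{17})$. To rule out $\Sigma_{\bar\mu}$, invoke Kawakubo's Lemma 9.1, which yields a necessary factorization condition for a homotopy sphere $\Sigma \in \Theta_{n+k}$ to lie in $I_h(M\times \mathbb{S}^k)$: it must arise as a composition through the stable attaching structure of $M\times \mathbb{S}^k$. Since the stable attaching map of $M$ is $t\nu_4$ with $t$ odd (by \eqref{stable_homotopy_type_rank1}), only classes in $\mathrm{Coker}(J_{17})$ reachable by composition with $\nu$ can occur; $\nu\kappa$ is manifestly of this form, while $\bar\mu$ is not (using $\bar\mu \in \langle \eta, 16\rho, 2\rangle$ and $\eta\nu = 0$, Kawakubo's criterion forces $\Sigma_{\bar\mu}\notin I_h$). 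This leaves $\eta\eta^*$ as the only undetermined generator, giving the stated dichotomy.

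The main obstacle I expect is the bookkeeping of applying Kawakubo's Lemma 9.1 to separate $\eta\eta^*$ (which escapes the obvious factorization obstruction — mirroring the phenomenon already observed in Lemma \ref{Inertiagroupop2s1} for $\mathbb{O}P^2\times\mathbb{S}^1$, where $\Sigma_{\eta\eta^*}$ is in the inertia group) from $\bar\mu$ (which does not survive). Concretely, one must check that $\eta\eta^*$ admits a representation compatible with Kawakubo's composition test on $M\times \mathbb{S}^9$, while $\bar\mu$ does not; this hinges on the Toda bracket identities $\nu\kappa = \langle \eta\kappa,\eta,2\rangle$ and $\bar\mu \in \langle \eta, 16\rho, 2\rangle$ together with $\eta\nu = 0$. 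The remaining steps (identification of $I_c$'s generator, removal of $bP_{18}$, and the inclusion $I_c\subseteq I_h$) are routine given the cited results.
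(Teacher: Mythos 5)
Your overall skeleton matches the paper's terse derivation: the lower bound $\mathbb{Z}/2\{\Sigma_{\nu\kappa}\} = I_c(M\times\mathbb{S}^9) \subseteq I_h(M\times\mathbb{S}^9)$ via Theorem \ref{ConcorMtimesSk}(iii) (with the generator correctly traced through the $\nu_{14}^*$ computation in Lemma \ref{imagenu}), the removal of $bP_{18}$ via Theorem \ref{bp18}, and an upper bound in $\mathrm{Coker}(J_{17})$ from Kawakubo's Lemma 9.1. So far this is the right architecture.

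The gap is in how you apply the Kawakubo criterion. You phrase the upper bound as ``only classes in $\mathrm{Coker}(J_{17})$ reachable by composition with $\nu$ can occur.'' But the image of post-composition by $\nu$ on $\pi_{14}^s \cong \mathbb{Z}/2\{\sigma^2\}\oplus\mathbb{Z}/2\{\kappa\}$ is exactly $\mathbb{Z}/2\{\nu\kappa\}$, since $\nu\sigma = 0$; in particular $\eta\eta^*$ is \emph{not} reachable this way. So your criterion, taken literally, excludes $\eta\eta^*$ just as decisively as it excludes $\bar\mu$, and would deliver the single answer $I_h = \mathbb{Z}/2\{\Sigma_{\nu\kappa}\}$ rather than the stated dichotomy. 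You then reverse course and assert that $\eta\eta^*$ ``escapes the obvious factorization obstruction,'' but the only evidence offered is the analogy with Lemma \ref{Inertiagroupop2s1}. That lemma is about the \emph{full} inertia group $I(\mathbb{O}P^2\times\mathbb{S}^1)$ and is proved through Lemma \ref{inertia}, i.e.\ the mapping-class-group computation $\gamma:\Theta_{n+1}\to\widetilde{\pi}_0(\mathrm{Diff}(M))$ — a mechanism with no analogue in the normal-invariant/factorization argument you are running for $I_h(M\times\mathbb{S}^9)$. In short, the step that is supposed to let $\eta\eta^*$ survive while $\bar\mu$ does not is internally inconsistent and unsupported; that is a genuine missing idea, not bookkeeping.

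A secondary point worth flagging: knowing $bP_{18}\cap I_h = 0$ together with a bound on the $\mathrm{Coker}(J_{17})$ part of $I_h$ does not by itself force $I_h$ to sit inside the chosen splitting $\mathbb{Z}/2\{\Sigma_{\nu\kappa}\}\oplus\mathbb{Z}/2\{\Sigma_{\eta\eta^*}\}$; one must still rule out elements of the form $\Sigma_{\eta\eta^*} + b$ with $0\neq b\in bP_{18}$. This is handled automatically for the $\nu\kappa$ direction because $\Sigma_{\nu\kappa}$ is already known to lie in $I_h$, but for $\eta\eta^*$ it requires an extra word; your write-up does not address it.
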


\end{document}